\documentclass[final]{dmtcs-episciences}

\usepackage[utf8]{inputenc}

\usepackage[english]{babel}

\usepackage{amsmath}
\usepackage{amsfonts,amssymb,latexsym,graphics,amsthm}
\usepackage{hyperref}
\usepackage{tikz}
\usepackage{youngtab}
\usepackage{rotating}
\usepackage{url}

\renewenvironment{itemize}{\begin{list}{\labelitemi}{\leftmargin=1.5em}}{\end{list}}
\renewcommand{\labelitemi}{--}

\newcommand{\n}{\null}
\renewcommand{\b}{\mbox{$\bullet$}}

\author{Matthieu Josuat-Vergès  \affiliationmark{1}  }

\title[Stammering tableaux]{Stammering tableaux}

\affiliation{CNRS, Laboratoire d'Informatique Gaspard Monge, Université Paris-est Marne-la-Vallée, FRANCE}
%
%

\newtheorem{theo}{Theorem}[section]

\newtheorem{lemm}[theo]{Lemma}
\newtheorem{prop}[theo]{Proposition}

\theoremstyle{definition}
\newtheorem{defi}[theo]{Definition}
\newtheorem{rema}[theo]{Remark}

\keywords{Young's lattice, tableaux, growth diagrams, Dyck paths}

\received{2016-1-12}

\revised{2017-5-30, 2017-8-29}

\accepted{2017-8-29}

\publicationdetails{19}{2017}{3}{3}{1351}

\begin{document}

\maketitle

\Ylinethick{1.2pt}
\Yboxdim{8pt}

\begin{abstract}
The PASEP (Partially Asymmetric Simple Exclusion Process) is a probabilistic
model of moving particles, which is of great interest in combinatorics, since it was realized that its 
partition function counts a certain kind of tableaux. These tableaux have several variants such as permutations
tableaux, alternative tableaux, tree-like tableaux, Dyck tableaux, etc.
We introduce in this context certain walks in Young's lattice, that we call {\it stammering tableaux}
(by analogy with oscillating tableaux, vacillating tableaux, hesitating tableaux). Some natural bijections
make a link with rook placements in a double staircase, chains of Dyck paths obtained by successive addition
of ribbons, Laguerre histories, Dyck tableaux, etc.
\end{abstract}



\section{Introduction}


\subsection{The PASEP}

The PASEP (Partially Asymmetric Simple Exclusion Process) is a probabilistic model of moving particles
which motivated a lot of recent combinatorial developments. Our description of the model is very brief 
since we are more interested in the combinatorial objects, so we refer to \cite{derrida} for 
a definition, and \cite{corteel} for the related combinatorics. A state
in this model is a binary word of length $N$ over $\circ$ and $\bullet$, and can be interpreted as 
a sequence of $N$ sites, each being empty ($\circ$) or occupied by a particle ($\bullet$). The possible 
transitions are :
\begin{itemize}
 \item a factor $\bullet\circ$ becomes $\circ\bullet$ (a particle moves to the right),
 \item a factor $\circ\bullet$ becomes $\bullet\circ$ (a particle moves to the left),
 \item an initial $\circ$ becomes $\bullet$ (a new particle arrives from the left),  
 \item a final $\bullet$ becomes $\circ$ (a particle exits on the right).
\end{itemize}
These $4$ events occur with probabilities depending on $4$ real positive parameters $p$, $q$, $\alpha$, $\beta$, 
and by a rescaling argument we can assume $p=1$. For each state $\tau\in \{ \circ, \bullet \} ^N$, its 
probability in the stationary distribution is denoted $p_\tau$.  

A recipe to compute the stationary probability $p_\tau$ is the {\it Matrix Ansatz} of Derrida et al.~obtained 
in \cite{derrida}. Suppose that we have two operators $F$ and $E$ satisfying a commutation relation:
\begin{equation} \label{relEF}
   FE-qEF = F+E.
\end{equation}
We can identify a state $\tau$ with a binary word $w$ in $F$ and $E$ by the rule $\bullet \to F$, $\circ \to E$.
Using \eqref{relEF}, the resulting binary word can be put in normal form, i.e., a linear combination of $E^iF^j$:
\[
   w = \sum_{i,j \geq 0}  c_{i,j} E^i F^j
\]
where $c_{i,j}$ are polynomials in $q$ (and a finite number of them are nonzero).
Then, the result is as follows:
\[
   p_{\tau} = \frac{ \bar p_{\tau} }{Z_N}
\]
where the nonnormalized probability $\bar p_{\tau}$ is $\sum_{i,j \geq 0}  c_{i,j} \alpha^{-i} \beta^{-j}$,
and the normalizing factor is $Z_N = \sum \bar p_{\tau}$ summed over $\tau \in \{\bullet,\circ\}^N$
(it can also be obtained from the normal form of $(F+E)^N$ since this expands as the sum of all binary words).
A consequence is that $p_{\tau}$ and $Z_N$ are polynomials with nonnegative coefficients in 
$\alpha^{-1}$, $\beta^{-1}$, and $q$.

\subsection{Tableau combinatorics}

The connection of the PASEP model with permutations and statistics on them is done in \cite{corteel}, where
Corteel and Williams used so-called {\it permutation tableaux}, defined as 0-1 fillings of Young diagrams
with certain rules. Indeed, since each $\bar p_{\tau}$ is a polynomial, it is a natural to look for its combinatorial 
meaning, and the main result of \cite{corteel} shows that this quantity is a generating function of permutation
tableaux whose shape is a particular Young diagram depending on $\tau$.
In particular the partition function $Z_N$ is a 3-variate refinement of $(N+1)!$ with a combinatorial
interpretation on permutations.

Later, other kinds of tableaux have been introduced: {\it alternative tableaux} \cite{viennot}, 
{\it tree-like tableaux} \cite{aval1}, and {\it Dyck tableaux} \cite{aval2}. 
They are all variants of each other, but each has its own combinatorial properties.
For example, Viennot \cite{viennot} showed that the relation \eqref{relEF} leads to alternative tableaux,
bypassing the probabilistic model.
Tree-like tableaux \cite{aval1} have a nice tree structure, and there is a nice insertion algorithm that
permits building these object inductively.
Particularly relevant to the present work, Dyck tableaux \cite{aval2} connects with
labelled Dyck paths called {\it Laguerre histories} (see also \cite{josuat}).
These various refecences make clear that all this tableau combinatorics is interesting in itself,
and the link with the PASEP is sometimes not apparent.

Essentially, our {\it stammering tableaux} ({\it tableaux bégayants} in French) can be seen as another variant of these objects. 
However, the word ``tableaux'' has a different meaning here: they are not fillings of Young diagrams,
and should rather be considered as a variant of {\it oscillating tableaux, vacillating tableaux}, or {\it hesitating tableaux}
(see \cite{chen} and below). These objects are essentially obtained from the Matrix Ansatz in~\eqref{relEF}.

\subsection{Realizations of the Matrix Ansatz operators}

Although it is not {\it a priori} needed to compute the stationary probabilities $p_{\tau}$,
it is useful to have explicit realizations of operators $E$ and $F$ satisfying \eqref{relEF}.
For example, Derrida et al.~provide tridiagonal semi-infinite matrices realizations of $E$ and $F$
that were exploited in \cite{josuat}.

A natural idea to realize the relation in \eqref{relEF} is to start from the much more common relation
\begin{equation} \label{reldu}
  DU-qUD=I
\end{equation}
where $I$ is the identity (sometimes called the $q$-canonical commutation relation). 
Then, let  
\[
  F=D(U+I), \qquad E=D(U+I)U.
\]
It can be checked that the relation in \eqref{reldu} implies the one in \eqref{relEF}, indeed:
\begin{align*}
  FE &= D(U+I)D(U+I)U = D(U+I)DU + D(U+I)DUU, \\
  EF &= D(U+I)UD(U+I) = D(U+I)UD + D(U+I)UDU,
\end{align*}
and we get 
\[
  FE-qEF = D(U+I) + D(U+I)U = F+E.
\]
Thus, explicit realizations of $E$ and $F$ can be obtained from the ones of $D$ and $U$.

\subsection{Walks in Young's lattice}

In the particular case $q=1$, a classical way to realize the relation $DU-UD=I$ is to define 
\begin{equation} \label{duud}
  U(\lambda) = \sum_{\mu \gtrdot \lambda} \mu \qquad \text{ and } \qquad 
  D(\lambda) = \sum_{\mu \lessdot \lambda} \mu,
\end{equation}
where $\lessdot$ is the cover relation in Young's lattice $\mathcal{Y}$, and $D$, $U$ are seen 
as linear operators on the vector space based on $\mathcal{Y}$. 
We refer to Stanley's notion of differential poset \cite{stanley} (see next section).
In this setting, 
\begin{itemize} 
 \item {\it oscillating tableaux} naturally appear in the expansion of $(U+D)^n \varnothing $,
 \item {\it vacillating tableaux} naturally appear in the expansion of $( (U+I) (D+I) )^n \varnothing $,
 \item {\it hesitating tableaux} naturally appear in the expansion of $ ( DU + ID + UI )^n \varnothing $.
\end{itemize}
See \cite{chen} for definitions and more on this subject. So, motivated by the Matrix Ansatz of the PASEP, it is 
natural to consider the tableaux appearing in similar expansions, with the powers of $F+E=D(U+I)^2$.
These are our {\it stammering tableaux}, formally defined in the next section.

These new objects allow one to see more clearly some properties that have been investigated before
in permutation tableaux and their variants. The various bijections here are rather simple and natural.
In particular, there is the question of a recursive construction to make clear that there are 
$n!$ such objects of size $n$, which can involve insertion algorithms \cite{aval1,aval2}.
As we will see through this article, the recursive construction in our case is extremely simple.
Another property is the link with earlier results by Françon and Viennot in \cite{francon}.
We will see that this link becomes clear through our notion of chains of Dyck shapes
(see Section~\ref{sec:5}). But perhaps the most important point is that these objects
make a bridge between the tableaux combinatorics of the PASEP and the tableaux combinatorics
of Young's lattice.

\subsection{Organization}

Sections~\ref{sec:2} to~\ref{sec:6} give definitions and several bijections 
(except Section~\ref{sec:4} that shows that a poset of Dyck paths naturally appearing in
Section~\ref{sec:3} is a lattice).
In Section~\ref{sec:7}, we consider a variant of stammering tableaux that we know how to enumerate, 
and Section~\ref{sec:8} contains a few open problems.

\section*{Acknowledgement}

This work was first presented at the ``Journées Viennot'' in June 2012, at the University of Bordeaux.
I thank the organizers for the invitation, and I thank Xavier for all I learned from him.

Also, I thank Tom Roby for drawing my attention to the work in his thesis \cite{roby}.

This work is supported by ANR-12-BS01-0017 (CARMA).

\section{Stammering tableaux and the double staircase}

We begin with a few definitions and reference to the literature.
A general reference is \cite{stanley2}.

A {\it Young diagram} is a bottom left aligned set of unit squares, for example:
\[
  \yng(1,2,4).
\]
Our definition respects the French convention but others exist. Such a diagram is 
characterized by its sequence of row lengths, from bottom to top. The previous
example gives $4,2,1$. Such a sequence is an {\it integer partition}, i.e., a 
nonincreasing sequence of positive integers. This gives a bijection between 
Young diagram and integer partition.

Each of the unit square in the Young diagram is called a {\it cell}.  
Each cell has four corners, and a point being a corner of at least one cell
is called a {\it vertex} of the Young diagram.

Let $\mathcal{Y}$ denote Young's lattice, which is the set of all Young digrams endowed
with the inclusion order (seeing Young diagrams as set of cells).
Let $\varnothing$ denote its minimal element, the empty Young diagram. 
Let $\lessdot$ denote the cover relation on $\mathcal{Y}$ (and $\gtrdot$ the reverse relation).

An important property of $\mathcal{Y}$ is that it is a {\it differential poset}
\cite{stanley}.

\label{sec:2}

\begin{defi}
A {\it stammering tableau} of size $n$ is a sequence $(\lambda^{(0)}, \dots , \lambda^{(3n)}) \in \mathcal{Y}^{3n+1}$ 
such that $\lambda^{(0)}=\lambda^{(3n)}=\varnothing$, and:
\begin{itemize}
 \item if $i\equiv 0\text{ or }1 \mod 3$ then either $\lambda^{(i)}\lessdot\lambda^{(i+1)}$ or $\lambda^{(i)} = \lambda^{(i+1)}$,
 \item if $i\equiv 2 \mod 3$ then $\lambda^{(i)}\gtrdot\lambda^{(i+1)}$.
\end{itemize}
\end{defi}

For example, 
\[
  \big( \varnothing , \;
  \yng(1) \:, \;
  \yng(2) \:; \;
  \yng(1) \:, \;
  \yng(2) \:, \;
  \yng(1,2) \:; \;
  \yng(1,1) \:, \;
  \yng(1,1) \:, \;
  \yng(1,1) \:; \;
  \yng(1) \:, \;
  \yng(1) \:, \;
  \yng(1) \:; \;
  \varnothing \big)
\]
is a stammering tableau of size 4. Note that for readability, there is a ``;'' at every three steps,
i.e., at each step going down. Through the whole article, we will have two running examples to 
illustrate the various bijections:
\[
  \Lambda_1 = 
  \big( \varnothing , \;
  \yng(1) \:, \;
  \yng(2) \:; \;
  \yng(1) \:, \;
  \yng(2) \:, \;
  \yng(1,2) \:; \;
  \yng(2) \:, \;
  \yng(2) \:, \;
  \yng(1,2) \:; \;
  \yng(1,1) \:, \;
  \yng(1,1) \:, \;
  \yng(1,1) \:; \;
  \yng(1) \:, \;
  \yng(1) \:, \;
  \yng(1) \:; \;
  \varnothing \big).
\]
and
\[
  \Lambda_2 =
  \big( \varnothing , \;
  \yng(1) \:, \;
  \yng(2) \:; \;
  \yng(1) \:, \;
  \yng(2) \:, \;
  \yng(2) \:; \;
  \yng(1) \:, \;
  \yng(1) \:, \;
  \yng(1,1) \:; \;
  \yng(1) \:, \;
  \yng(2) \:, \;
  \yng(2) \:; \;
  \yng(1) \:, \;
  \yng(1) \:, \;
  \yng(2) \:; \;
  \yng(1) \:, \;
  \yng(1) \:, \;
  \yng(1) \:; \;
  \varnothing \big).
\]

Roby \cite{roby} and Krattenthaler \cite{krattenthaler} showed that oscillating tableaux and their variants 
correspond via Fomin's growth diagrams \cite{fomin} to some fillings of Young diagrams. We can do the 
same with stammering tableaux, and we are led to the following definition:

\begin{defi}
Let $2\delta_n$ denote the {\it double staircase diagram} of $n$ rows, defined as the Young diagram with row lengths $(2n,2n-2,\dots,2)$.
A partial filling of $2\delta_n$ with some dots, such that there is exactly one dot per row and at most one per column,
is called a {\it rook placement} of size $n$.
\end{defi}

See Figure~\ref{rook} for examples. Note that it is easy to build a rook placement $T'$ of size $n$ inductively from a rook 
placement $T$ of size $n-1$. When adding a new row to the bottom of $T$, this gives {\it a priori} $2n$ possible choices for 
the location of the dot of this new row, but $n-1$ of them are forbidden by the rule that there is at most one dot per column. 
So there remains $n+1$ choice for $T'$ starting with a given $T$. In particular this shows that there are $(n+1)!$ rook 
placements of size $n$.

\Yboxdim{14pt}

\begin{figure}[h!tp]
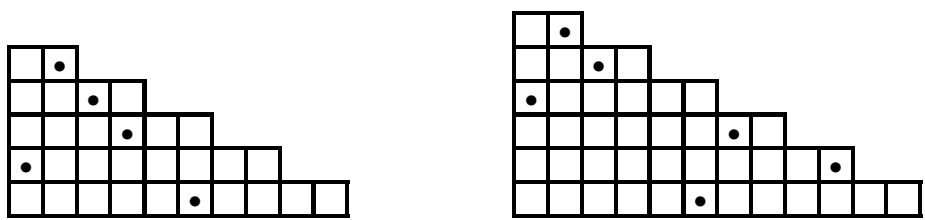
 \centering 
  \young(\n\b,\n\n\b\n,\n\n\n\b\n\n,\b\n\n\n\n\n\n\n,\n\n\n\n\n\b\n\n\n\n)         
  \hspace{2cm}
  \young(\n\b,\n\n\b\n,\b\n\n\n\n\n,\n\n\n\n\n\n\b\n,\n\n\n\n\n\n\n\n\n\b,\n\n\n\n\n\b\n\n\n\n\n\n)
\caption{Rook placements in the double staircase. \label{rook}}
\end{figure}

Let us present the description of Fomin's growth diagrams. See \cite{krattenthaler,roby}.
The idea is to assign a Young diagram to each vertex of the double staircase.
So in the pictures, we will have a big Young diagram, namely the double staircase, and 
small Young diagrams at its vertices. Note that the terminology ``growth diagram'' is rather standard
even if there is possible confusion with the notion of Young diagram.

In each cell of the double staircase, the Young diagram in the North-East corner is determined by the three 
Young diagrams in the other corners, according to the local rules given below. 
Starting with empty diagrams $\varnothing$ along the West and South border of the double staircase,
we can assign diagrams to all other vertices.
The rules are as follows. Consider the four diagrams at the corners of a cell, then, 
with the notation of Figure~\ref{cellgrowth}:
\begin{itemize}
 \item if $\mu\neq\nu$, then $\rho= \mu \cup \nu$;
 \item if $\lambda=\mu=\nu$ and the cell is empty, $\rho=\lambda$;
 \item if $\lambda=\mu=\nu$ and the cell contains a dot, then $\rho$ is obtained from $\lambda$ by adding 1 to its first part;
 \item if $\lambda\neq\mu=\nu$, then $\rho$ is obtained from $\mu$ by adding 1 to its $(k+1)$st part, where $k>0$ is minimal
       such that $\lambda_k\neq \mu_k$.
\end{itemize}

\begin{figure}[h!tp]  \centering 
 \begin{tikzpicture}
    \draw[thick] (0,0) -- (1,0) -- (1,1) -- (0,1) -- (0,0) ;  
    \node at (0.5,0.5) {$(\bullet)$};
    \node at (-0.2,-0.2) {$\lambda$};
    \node at (-0.2,1.2) {$\mu$};
    \node at (1.2,1.2) {$\rho$};
    \node at (1.2,-0.2) {$\nu$};
 \end{tikzpicture}

\caption{Generic cell of a growth diagram.  \label{cellgrowth} }
\end{figure}
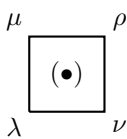

\Yboxdim{6pt}

\begin{figure}[h!tp]  \centering
  \begin{tikzpicture}[scale=0.56,thick]
    \draw[dotted,gray] (0,0) grid (2,5);
    \draw[dotted,gray] (2,0) grid (4,4);
    \draw[dotted,gray] (4,0) grid (6,3);
    \draw[dotted,gray] (6,0) grid (8,2);
    \draw[dotted,gray] (8,0) grid (10,1);
    \node at (5.5,0.5) {$\bullet$};
    \node at (0.5,1.5) {$\bullet$};
    \node at (3.5,2.5) {$\bullet$};
    \node at (2.5,3.5) {$\bullet$};
    \node at (1.5,4.5) {$\bullet$};
    \node at (0,0) {$\varnothing$};
    \node at (0,1) {$\varnothing$};    
    \node at (0,2) {$\varnothing$};    
    \node at (0,3) {$\varnothing$};    
    \node at (0,4) {$\varnothing$};    
    \node at (0,5) {$\varnothing$};    
    \node at (1,0) {$\varnothing$};    
    \node at (1,1) {$\varnothing$};    
    \node at (1,2) {\yng(1)};    
    \node at (1,3) {\yng(1)};    
    \node at (1,4) {\yng(1)};    
    \node at (1,5) {\yng(1)};    
    \node at (2,0) {$\varnothing$};    
    \node at (2,1) {$\varnothing$};    
    \node at (2,2) {\yng(1)};
    \node at (2,3) {\yng(1)};
    \node at (2,4) {\yng(1)};
    \node at (2,5) {\yng(2)};
    \node at (3,0) {$\varnothing$};    
    \node at (3,1) {$\varnothing$};    
    \node at (3,2) {\yng(1)}; 
    \node at (3,3) {\yng(1)};
    \node at (3,4) {\yng(2)};
    \node at (4,0) {$\varnothing$};    
    \node at (4,1) {$\varnothing$};    
    \node at (4,2) {\yng(1)};
    \node at (4,3) {\yng(2)};
    \node at (4,4) {\yng(1,2)};
    \node at (5,0) {$\varnothing$};    
    \node at (5,1) {$\varnothing$};    
    \node at (5,2) {\yng(1)};
    \node at (5,3) {\yng(2)};
    \node at (6,0) {$\varnothing$};    
    \node at (6,1) {\yng(1)};
    \node at (6,2) {\yng(1,1)};
    \node at (6,3) {\yng(1,2)};
    \node at (7,0) {$\varnothing$};    
    \node at (7,1) {\yng(1)};
    \node at (7,2) {\yng(1,1)};
    \node at (8,0) {$\varnothing$};    
    \node at (8,1) {\yng(1)};
    \node at (8,2) {\yng(1,1)};
    \node at (9,0) {$\varnothing$};    
    \node at (9,1) {\yng(1)};
    \node at (10,0) {$\varnothing$};    
    \node at (10,1) {\yng(1)};
  \end{tikzpicture}
  \hspace{1cm}
  \begin{tikzpicture}[scale=0.56,thick]
    \draw[dotted,gray] (0,0) grid (2,6);
    \draw[dotted,gray] (2,0) grid (4,5);
    \draw[dotted,gray] (4,0) grid (6,4);
    \draw[dotted,gray] (6,0) grid (8,3);
    \draw[dotted,gray] (8,0) grid (10,2);   
    \draw[dotted,gray] (10,0) grid (12,1);   
    \node at (1.5,5.5) {$\bullet$};
    \node at (2.5,4.5) {$\bullet$};
    \node at (0.5,3.5) {$\bullet$};
    \node at (6.5,2.5) {$\bullet$};
    \node at (9.5,1.5) {$\bullet$};
    \node at (5.5,0.5) {$\bullet$};    
    \node at (0,0) {$\varnothing$};
    \node at (0,1) {$\varnothing$};    
    \node at (0,2) {$\varnothing$};    
    \node at (0,3) {$\varnothing$};    
    \node at (0,4) {$\varnothing$};    
    \node at (0,5) {$\varnothing$};    
    \node at (0,6) {$\varnothing$};    
    \node at (1,0) {$\varnothing$};    
    \node at (1,1) {$\varnothing$};    
    \node at (1,2) {$\varnothing$};    
    \node at (1,3) {$\varnothing$};    
    \node at (1,4) {\yng(1)};    
    \node at (1,5) {\yng(1)};    
    \node at (1,6) {\yng(1)};    
    \node at (2,0) {$\varnothing$};    
    \node at (2,1) {$\varnothing$};    
    \node at (2,2) {$\varnothing$};    
    \node at (2,3) {$\varnothing$};    
    \node at (2,4) {\yng(1)};
    \node at (2,5) {\yng(1)};
    \node at (2,6) {\yng(2)};
    \node at (3,0) {$\varnothing$};    
    \node at (3,1) {$\varnothing$};    
    \node at (3,2) {$\varnothing$};   
    \node at (3,3) {$\varnothing$};   
    \node at (3,4) {\yng(1)};
    \node at (3,5) {\yng(2)};
    \node at (4,0) {$\varnothing$};    
    \node at (4,1) {$\varnothing$};    
    \node at (4,2) {$\varnothing$};    
    \node at (4,3) {$\varnothing$};    
    \node at (4,4) {\yng(1)};
    \node at (4,5) {\yng(2)};
    \node at (5,0) {$\varnothing$};    
    \node at (5,1) {$\varnothing$};    
    \node at (5,2) {$\varnothing$};    
    \node at (5,3) {$\varnothing$};    
    \node at (5,4) {\yng(1)};
    \node at (6,0) {$\varnothing$};    
    \node at (6,1) {\yng(1)};
    \node at (6,2) {\yng(1)};
    \node at (6,3) {\yng(1)};
    \node at (6,4) {\yng(1,1)};
    \node at (7,0) {$\varnothing$};    
    \node at (7,1) {\yng(1)};
    \node at (7,2) {\yng(1)};
    \node at (7,3) {\yng(2)};
    \node at (8,0) {$\varnothing$};    
    \node at (8,1) {\yng(1)};
    \node at (8,2) {\yng(1)};
    \node at (8,3) {\yng(2)};
    \node at (9,0) {$\varnothing$};    
    \node at (9,1) {\yng(1)};
    \node at (9,2) {\yng(1)};
    \node at (10,0) {$\varnothing$};    
    \node at (10,1) {\yng(1)};
    \node at (10,2) {\yng(2)};
    \node at (11,0) {$\varnothing$};    
    \node at (11,1) {\yng(1)};
    \node at (12,0) {$\varnothing$};    
    \node at (12,1) {\yng(1)};
  \end{tikzpicture}
  \caption{Growth diagrams. \label{growth}}
\end{figure}

The growth diagrams obtained from the rook placements in Figure~\ref{rook} are in Figure~\ref{growth}.
By following the North-East boundary, from the top-left corner to the bottom-right corner, 
we read a sequence of Young diagrams. The two growth diagrams in Figure~\ref{growth} give our two
running examples $\Lambda_1$ and $\Lambda_2$ defined at the beginning of this section.

The same bijection can be described using the Schensted insertion instead of the growth diagrams. 
This algorithm is described in \cite[Chapter~7]{stanley2}.
An {\it incomplete standard tableau} is a filling of a Young diagram with integers such that:
\begin{itemize}
 \item the integers are increasing along rows (from left to right) and columns (from bottom to top),
 \item there are no repetition among them.
\end{itemize}
Let $T$ be such a tableau.
If $k>0$ is not an entry of $T$, let $T\leftarrow k$ denote the tableau obtained after row insertion of $k$
(see \cite[Chapter~7]{stanley2}). 
If $k$ is an entry of $T$ located at a corner, let $T\rightarrow k$ denote the tableau obtained after removing $k$.

From a rook placement $R$ of size $n$, we can form a sequence $T_0,\dots,T_{3n}$ of incomplete standard tableaux in the 
following way. We label the dots in $R$ so that the dot in the $k$th row (from bottom to top) has label $k$, see Figure~\ref{rook2}.
We read the North-East border of $R$ from the top-left corner to the bottom-right corner. This border 
contains $3n$ steps, and starting from the empty tableau $T_0$, we define $T_0,\dots,T_{3n}$ in such a way that:
\begin{itemize}
 \item if the $i$th step is horizontal, at the top of a column which contains no dot, then $T_{i}=T_{i-1}$,
 \item if the $i$th step is horizontal, at the top of a column which contains the dot with label $k$, then $T_{i}= T_{i-1} \leftarrow k $,
 \item if the $i$th step is vertical, at the right of a row which contains the dot with label $k$, then $T_{i}=T_{i-1} \rightarrow k$.
\end{itemize}
For example, the rook placements in Figure~\ref{rook2} respectively give the sequences

\scriptsize

\Yboxdim{12pt}
\[
  \varnothing, 
  \young(2) 
  \:, \;
  \young(25) 
  \:; \;
  \young(2) 
  \:, \;
  \young(24) 
  \:, \;
  \young(4,23) 
  \:; \;
  \young(23) 
  \:, \;
  \young(23) 
  \:, \;
  \young(2,13) 
  \:; \;
  \young(2,1) 
  \:, \;
  \young(2,1) 
  \:, \;
  \young(2,1) 
  \:; \;
  \young(1) 
  \:, \;
  \young(1) 
  \:, \;
  \young(1) 
  \:; \; 
  \varnothing
\]
\normalsize
and
\scriptsize
\[
  \varnothing , \;
  \young(4) \:, \;
  \young(46) \:; \;
  \young(4) \:, \;
  \young(45) \:, \;
  \young(45) \:; \;
  \young(4) \:, \;
  \young(4) \:, \;
  \young(4,1) \:; \;
  \young(1) \:, \;
  \young(13) \:, \;
  \young(13) \:; \;
  \young(1) \:, \;
  \young(1) \:, \;
  \young(12) \:; \;
  \young(1) \:, \;
  \young(1) \:, \;
  \young(1) \:; \;  
  \varnothing.
\]
\normalsize
If we forget the integer entries, the sequence of shapes of the incomplete standard tableaux $T_0,\dots,T_{3n}$ form a 
stammering tableau $\lambda^{(0)},\dots, \lambda^{(3n)}$, and it is precisely the one associated to $R$ via the growth diagram.
This is a general fact and follows from known properties of these constructions \cite{fomin,krattenthaler,roby}.

\Yboxdim{14pt}

\begin{figure}[h!tp]
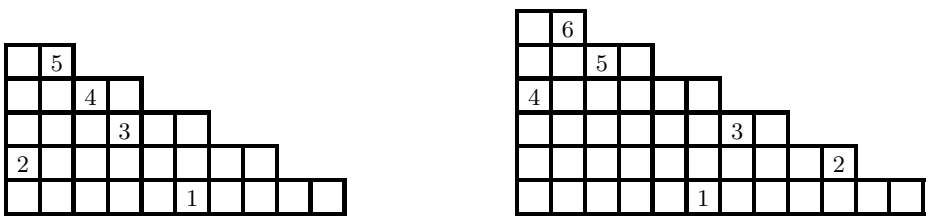
    \centering 
  {\small \young(\n 5,\n\n 4\n,\n\n\n 3\n\n,2\n\n\n\n\n\n\n,\n\n\n\n\n 1\n\n\n\n) }
  \hspace{2cm}
  {\small \young(\n 6,\n\n 5\n,4\n\n\n\n\n,\n\n\n\n\n\n 3\n,\n\n\n\n\n\n\n\n\n 2,\n\n\n\n\n 1\n\n\n\n\n\n) }
\caption{Rook placements in double staircase, with labeled dots. \label{rook2}}
\end{figure}

Finally, let us present a third equivalent construction via Viennot's shadow lines \cite{viennot2}.
We give only one example of this construction, in Figure~\ref{shadows} (page~\pageref{shadows}).

Start from a rook placement $R$ in the double staircase. The idea is that each dot
projects a quarterplane-like shadow to its North-East. 
The boundary of the union of all these shadows gives a curve, the {\it first shadow}.
We can do the same thing with the dots left alone by the first shadow, this gives the second shadow.
The other shadows are constructed in the same way. In the first picture of the example, we see that
only two shadows are necessary to cover all dots of the rook placement. Now, the idea is that 
these shadow lines determine the first rows of the partitions in a stammering tableau as follows.
Starting from 0, we read the $3n$ steps of the North-East boundary of the diagram, add 1 when reading
a horizontal step crossing a shadow line, remove 1 when reading a vertical step crossing a shadow line,
doing nothing otherwise.

The next step is to put a dot at each $\daleth$-shaped corner of the shadows.
We obtain a new set of dots, which has to be a rook placement as well.
The shadows of these dots gives the second rows in the stammering tableau.
See the second picture in the example of Figure~\ref{shadows}.

Going on with the successive shadows of the successive new sets of dots give the full stammering tableau.
We don't give the proof that this construction is equivalent to the previous ones, and refer to \cite{viennot2} 
for more details.

\begin{rema}
 We have introduced stammering tableaux using the interpretation of the operators $D$ and $U$ acting on
 Young diagrams, and given bijections with the rook placements in the double staircase using various methods.
 It is also possible to obtain the rook placements from the operators $D$ and $U$ (as abstract operators
 satisfying a commutation relation $DU-UD=I$) using the methods in \cite{varvak}.
\end{rema}

\begin{rema}
 Although we do not give the details here, it is possible to give {\it reverse local rules}
 for growth diagrams. See \cite[Section~2.1.8]{langer}.
 In the notation of Figure~\ref{growth}, it means that if we know $\mu$, $\nu$,
 and $\rho$, we can recover $\lambda$ and whether the cell contains a dot or not. So if we have a 
 stammering tableaux along the North-East border of the double staircase diagram, we can apply these
 rules to complete the rest of the picture, and get a rook placement. This is the reverse bijection of
 the one presented in this section.
\end{rema}

\begin{figure}[h!tp]  \centering 
  \begin{tikzpicture}[scale=0.6,thick]
    \draw (0,0) grid (2,7);
    \draw (2,0) grid (4,6);
    \draw (4,0) grid (6,5);
    \draw (6,0) grid (8,4);
    \draw (8,0) grid (10,3);
    \draw (10,0) grid (12,2);
    \draw (12,0) grid (14,1);    
    \draw[red,line width=2pt] (0.5,7.3) -- (0.5,6.5) -- (1.5,6.5) -- (1.5,3.5) -- (5.5,3.5) -- (5.5,2.5) -- (6.5,2.5) -- (6.5,0.5) -- (14.3,0.5);
    \draw[red,line width=2pt] (2.5,6.3) -- (2.5,5.5) -- (3.5,5.5) -- (3.5,4.5) -- (6.3,4.5);
    \draw[red,line width=2pt] (8.5,3.3) -- (8.5,1.5) -- (12.3,1.5);
    \node at (6.5,0.5) {$\bullet$};
    \node at (8.5,1.5) {$\bullet$};
    \node at (5.5,2.5) {$\bullet$};
    \node at (1.5,3.5) {$\bullet$};
    \node at (3.5,4.5) {$\bullet$};
    \node at (2.5,5.5) {$\bullet$};
    \node at (0.5,6.5) {$\bullet$};
  \end{tikzpicture}
  
  \vspace{7mm}
  
  \begin{tikzpicture}[scale=0.6,thick]
    \draw (0,0) grid (2,7);
    \draw (2,0) grid (4,6);
    \draw (4,0) grid (6,5);
    \draw (6,0) grid (8,4);
    \draw (8,0) grid (10,3);
    \draw (10,0) grid (12,2);
    \draw (12,0) grid (14,1);    
    \draw[blue,line width=2pt] (1.5,7.3) -- (1.5,6.5) -- (2.3,6.5);
    \draw[blue,line width=2pt] (3.5,6.3) -- (3.5,5.5) -- (4.3,5.5);
    \draw[blue,line width=2pt] (5.5,5.3) -- (5.5,3.5) -- (6.5,3.5) -- (6.5,2.5) -- (10.3,2.5);
    \node at (6.5,2.5) {$\bullet$};
    \node at (5.5,3.5) {$\bullet$};
    \node at (3.5,5.5) {$\bullet$};
    \node at (1.5,6.5) {$\bullet$};
  \end{tikzpicture}
  
  \vspace{7mm}
  
  \begin{tikzpicture}[scale=0.6,thick]
    \draw (0,0) grid (2,7);
    \draw (2,0) grid (4,6);
    \draw (4,0) grid (6,5);
    \draw (6,0) grid (8,4);
    \draw (8,0) grid (10,3);
    \draw (10,0) grid (12,2);
    \draw (12,0) grid (14,1);    
    \draw[green,line width=2pt] (6.5,4.3) -- (6.5,3.5) -- (8.3,3.5);
    \node at (6.5,3.5) {$\bullet$};
  \end{tikzpicture}
  
  \vspace{8mm}

  $\varnothing$, 
  \begin{tikzpicture}[scale=0.3,thick]
    \draw[fill=red] (0,0) -- (0,1) -- (1,1) -- (1,0) -- (0,0);
  \end{tikzpicture} , 
  \begin{tikzpicture}[scale=0.3,thick]
    \draw[fill=blue] (0,2) -- (0,1) -- (1,1) -- (1,2) -- (0,2);
    \draw[fill=red] (0,0) -- (0,1) -- (1,1) -- (1,0) -- (0,0);
  \end{tikzpicture} ; 
  \begin{tikzpicture}[scale=0.3,thick]
    \draw[fill=red] (0,0) -- (0,1) -- (1,1) -- (1,0) -- (0,0);
  \end{tikzpicture} , 
  \begin{tikzpicture}[scale=0.3,thick]
    \draw[fill=red] (0,0) -- (0,1) -- (2,1) -- (2,0) -- (0,0);
    \draw (1,0) -- (1,1);
  \end{tikzpicture} , 
  \begin{tikzpicture}[scale=0.3,thick]
    \draw[fill=blue] (0,2) -- (0,1) -- (1,1) -- (1,2) -- (0,2);
    \draw[fill=red] (0,0) -- (0,1) -- (2,1) -- (2,0) -- (0,0);
    \draw (1,0) -- (1,1);
  \end{tikzpicture} ;
  \begin{tikzpicture}[scale=0.3,thick]
    \draw[fill=red] (0,0) -- (0,1) -- (2,1) -- (2,0) -- (0,0);
    \draw (1,0) -- (1,1);
  \end{tikzpicture} , 
  \begin{tikzpicture}[scale=0.3,thick]
    \draw[fill=red] (0,0) -- (0,1) -- (2,1) -- (2,0) -- (0,0);
    \draw (1,0) -- (1,1);
  \end{tikzpicture} , 
  \begin{tikzpicture}[scale=0.3,thick]
    \draw[fill=blue] (0,2) -- (0,1) -- (1,1) -- (1,2) -- (0,2);
    \draw[fill=red] (0,0) -- (0,1) -- (2,1) -- (2,0) -- (0,0);
    \draw (1,0) -- (1,1);
  \end{tikzpicture} ;
  \begin{tikzpicture}[scale=0.3,thick]
    \draw[fill=blue] (0,2) -- (0,1) -- (1,1) -- (1,2) -- (0,2);
    \draw[fill=red] (0,0) -- (0,1) -- (1,1) -- (1,0) -- (0,0);
  \end{tikzpicture} ,
  \begin{tikzpicture}[scale=0.3,thick]
    \draw[fill=green] (0,3) -- (0,2) -- (1,2) -- (1,3) -- (0,3);
    \draw[fill=blue] (0,2) -- (0,1) -- (1,1) -- (1,2) -- (0,2);
    \draw[fill=red] (0,0) -- (0,1) -- (1,1) -- (1,0) -- (0,0);
  \end{tikzpicture} ,
  \begin{tikzpicture}[scale=0.3,thick]
    \draw[fill=green] (0,3) -- (0,2) -- (1,2) -- (1,3) -- (0,3);
    \draw[fill=blue] (0,2) -- (0,1) -- (1,1) -- (1,2) -- (0,2);
    \draw[fill=red] (0,0) -- (0,1) -- (1,1) -- (1,0) -- (0,0);
  \end{tikzpicture} ;
  
  \vspace{4mm}
  
  \begin{tikzpicture}[scale=0.3,thick]
    \draw[fill=blue] (0,2) -- (0,1) -- (1,1) -- (1,2) -- (0,2);
    \draw[fill=red] (0,0) -- (0,1) -- (1,1) -- (1,0) -- (0,0);
  \end{tikzpicture} ,
  \begin{tikzpicture}[scale=0.3,thick]
    \draw[fill=blue] (0,2) -- (0,1) -- (1,1) -- (1,2) -- (0,2);
    \draw[fill=red] (0,0) -- (0,1) -- (2,1) -- (2,0) -- (0,0);
    \draw (1,0) -- (1,1);
  \end{tikzpicture} ,
  \begin{tikzpicture}[scale=0.3,thick]
    \draw[fill=blue] (0,2) -- (0,1) -- (1,1) -- (1,2) -- (0,2);
    \draw[fill=red] (0,0) -- (0,1) -- (2,1) -- (2,0) -- (0,0);
    \draw (1,0) -- (1,1);
  \end{tikzpicture} ;
  \begin{tikzpicture}[scale=0.3,thick]
    \draw[fill=red] (0,0) -- (0,1) -- (2,1) -- (2,0) -- (0,0);
    \draw (1,0) -- (1,1);
  \end{tikzpicture} ,
  \begin{tikzpicture}[scale=0.3,thick]
    \draw[fill=red] (0,0) -- (0,1) -- (2,1) -- (2,0) -- (0,0);
    \draw (1,0) -- (1,1);
  \end{tikzpicture} ,
  \begin{tikzpicture}[scale=0.3,thick]
    \draw[fill=red] (0,0) -- (0,1) -- (2,1) -- (2,0) -- (0,0);
    \draw (1,0) -- (1,1);
  \end{tikzpicture} ;
  \begin{tikzpicture}[scale=0.3,thick]
    \draw[fill=red] (0,0) -- (0,1) -- (1,1) -- (1,0) -- (0,0);
  \end{tikzpicture} ,
  \begin{tikzpicture}[scale=0.3,thick]
    \draw[fill=red] (0,0) -- (0,1) -- (1,1) -- (1,0) -- (0,0);
  \end{tikzpicture} ,
  \begin{tikzpicture}[scale=0.3,thick]
    \draw[fill=red] (0,0) -- (0,1) -- (1,1) -- (1,0) -- (0,0);
  \end{tikzpicture} ;
  $\varnothing$
    
  \caption{Shadows of a rook placement. \label{shadows} }
\end{figure}

\section{Chains of Dyck shapes}

\label{sec:3}

A {\it Dyck path of length} $2n$ is a path in $\mathbb{N}^2$ from $(0,0)$ to $(2n,0)$ with steps $(1,1)$ and $(1,-1)$.
The two kind of steps will be denoted $\nearrow = (1,1)$ and $\searrow=(1,-1)$.
The {\it height} of $(x,y)\in\mathbb{N}^2$ is its second coordinate $y$. Accordingly, each step in a Dyck path
has an {\it initial height} and a {\it final height}, namely the heights of its starting point and ending point, respectively.

We can also see a Dyck path of length $2n$ as a binary word of length $2n$ over $\{\nearrow,\searrow\}$ such that
in each prefix, there are more $\nearrow$ than $\searrow$ (in the weak sense). These words are called {\it Dyck words}.
A prefix of a Dyck word is called a {\it Dyck prefix}.

A property of stammering tableaux or rook placements that will be important in the sequel is that we can naturally 
associate a Dyck path to each of them.

\begin{defi}
 Let $R$ be a rook placement in the double staircase $2\delta_n$. We define a Dyck path $d(R)$ of length $2n+2$ as follows:
 \begin{itemize}
  \item the first step is $\nearrow$, the last step is $\searrow$,
  \item if $2\leq i\leq 2n+1$, the $i$th step is $\nearrow$ if the $(i-1)$st column of $R$ contains a dot and $\searrow$ otherwise.
 \end{itemize}
\end{defi}

See Figure~\ref{Rdyck} for an example.

\begin{figure}[h!tp]   \centering
  \begin{tikzpicture}[scale=0.4,thick]
    \draw[dotted,gray] (0,0) grid (12,6);
    \draw (0,0) -- (5,5) -- (6,4) -- (7,5) -- (12,0);
  \end{tikzpicture}
 \hspace{2cm}
  \begin{tikzpicture}[scale=0.4,thick]
    \draw[dotted,gray] (0,0) grid (14,6);
    \draw (0,0) -- (4,4) -- (6,2) -- (8,4) -- (10,2) -- (11,3) -- (14,0);
  \end{tikzpicture}
 \caption{ The Dyck paths associated to the rook placements in Figure~\ref{rook}. \label{Rdyck} }
\end{figure}
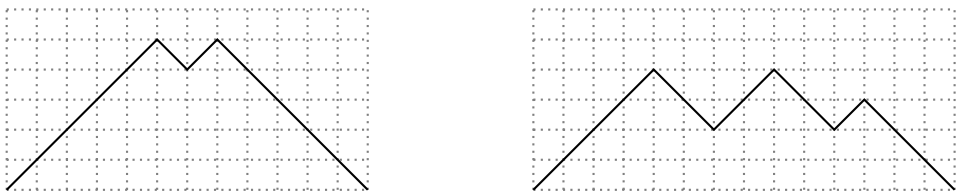

\begin{lemm}
 With the previous definition, $d(R)$ is indeed a Dyck path.
\end{lemm}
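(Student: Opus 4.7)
The plan is to verify the two defining conditions of a Dyck path: (1) that $d(R)$ has equal numbers of $\nearrow$ and $\searrow$, and (2) that every prefix contains at least as many $\nearrow$ as $\searrow$. Both reduce to elementary counting against the shape of $2\delta_n$.

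For (1), the bottom row of $2\delta_n$ has length $2n$, so there are exactly $2n$ columns. Since $R$ has one dot per row and $n$ rows, exactly $n$ columns contain a dot and $n$ do not. Hence the $2n$ ``middle'' steps of $d(R)$ comprise $n$ of each kind, and together with the initial $\nearrow$ and final $\searrow$ we get $n+1$ up-steps and $n+1$ down-steps, for a path of length $2n+2$ ending at height $0$.

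For (2), let $c_j$ denote the number of dots of $R$ in columns $1,\dots,j$. For $2\le k\le 2n+1$, the height of $d(R)$ after $k$ steps equals $1+2c_{k-1}-(k-1)$, so the Dyck condition is equivalent to $c_j\ge (j-1)/2$ for every $j\in\{1,\dots,2n\}$. The key combinatorial input is the shape of $2\delta_n$: its rows, from top to bottom, have lengths $2,4,\dots,2n$, so exactly $\lfloor j/2\rfloor$ rows have length at most $j$. The unique dot in any such row is forced to lie in one of the first $j$ columns, giving $c_j\ge \lfloor j/2\rfloor\ge (j-1)/2$, which is what we need.

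I do not expect any real obstacle: the only thing to keep straight is the index shift between step number and column number (the $k$-th step for $k\ge 2$ reads off the $(k-1)$-st column), and the bookkeeping of the one mandatory $\nearrow$ at the start. Once the formula for the height is written down, the inequality $\lfloor j/2\rfloor\ge (j-1)/2$ finishes the argument, and the final step down to height $0$ is automatic from the balance in~(1).
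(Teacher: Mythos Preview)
Your proof is correct and follows essentially the same approach as the paper: both compute the height after the first $k$ steps as $1+2c_{k-1}-(k-1)$ (the paper writes this as $1+x-y$ with $x=c_i$, $y=i-c_i$) and bound $c_j\ge\lfloor j/2\rfloor$ using that the topmost $\lfloor j/2\rfloor$ rows lie entirely within the first $j$ columns. You are slightly more explicit than the paper in separating out and verifying the endpoint condition~(1), which the paper leaves implicit.
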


\begin{proof}
 Let $1\leq i \leq 2n$. Among the first $i$ columns of $R$, say that $x$ contains a dot and $y$ are empty (so $x+y=i$).
 In the double staircase, the first $i$ columns entirely contain the topmost $\lfloor i/2 \rfloor$ rows, 
 so $x \geq \lfloor i/2 \rfloor$, and $y=i-x\leq \lceil i/2 \rceil$. The first $i+1$ steps of $d(R)$ contains
 $1+x$ steps $\nearrow$ and $y$ steps $\searrow$, so the height after the $(i+1)$st step is $1+x-y$.
 We have $1+x-y\geq 1+ \lfloor i/2 \rfloor - \lceil i/2 \rceil \geq 0$ so that the path $d(R)$ 
 stays above the horizontal axis.
\end{proof}

Instead of Dyck paths, it will convenient to use the equivalent notion given below.

\begin{defi}
 The {\it staircase partition} of size $n$ is $\delta_n=(n,n-1,\dots,1) \in \mathcal{Y}$.
 A {\it Dyck shape} of length $2n$ is a skew shape $\delta_n / \lambda$ where $\lambda\in\mathcal{Y}$ is such that $\lambda\subset\delta_{n-1}$.
\end{defi}

We will use here an unusual notation for Young diagrams, that we call {\it Japanese notation}. It is obtained from the French notation by performing 
a 135 degree clockwise rotation.
By following the upper border of a Dyck shape in Japanese notation, we recover a Dyck path. 
See Figure~\ref{dyck} for an example with $n=5$ and $\lambda=(3,1,1,1)$. 
Note that the parts of the partition correspond to the number of cells in each South-East to North-West diagonal.

\begin{figure}[h!tp]   \centering
  \begin{tikzpicture}[scale=0.4,thick]
    \draw[dotted,gray] (0,0) grid (10,4);
    \draw (0,0) -- (2,2) -- (3,1) -- (5,3) -- (8,0) -- (9,1) -- (10,0);
  \end{tikzpicture}
 \hspace{2cm}
   \begin{tikzpicture}[scale=0.4,thick]
    \draw[dotted,gray] (2,2) -- (5,5) -- (9,1);
    \draw[dotted,gray] (3,3) -- (4,2);
    \draw[dotted,gray] (4,4) -- (5,3) -- (6,4);
    \draw[dotted,gray] (6,2) -- (7,3);
    \draw[dotted,gray] (7,1) -- (8,2);
    \draw (0,0) -- (2,2) -- (5,-1) -- (7,1) -- (9,-1) -- (10,0) -- (9,1) -- (7,-1) -- (4,2) -- (1,-1) -- (0,0);
    \draw (1,1) -- (3,-1) -- (6,2) -- (7,1);
    \draw (6,2) -- (5,3) -- (4,2);
   \end{tikzpicture}
 \caption{A Dyck shape. \label{dyck} }
\end{figure}
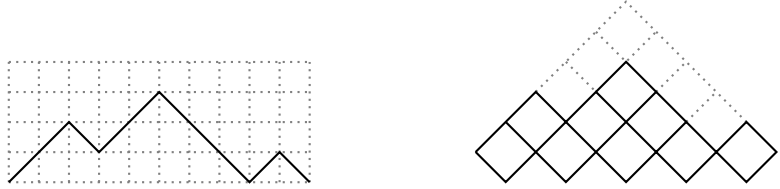

Each (unit) square cell of a Dyck shape has some coordinates $(x,y)$. They are normalized so that the bottom left cell has 
coordinates $(0,0)$, the one to its North-East has coordinates $(1,1)$, the bottom right cell has coordinates $(2n-2,0)$, and 
so on.
A {\it column} of a Dyck shape is a set of cells having the same $x$-coordinate, for example the Dyck shape in Figure~\ref{dyck} has 8 non-empty
columns, one of them having 2 cells and the others only 1 cell. Similarly, a {\it row} of a Dyck shape is a set of cells having the same 
$y$-coordinate. In the example of Figure~\ref{dyck}, there are three non-empty rows. The notion of diagonals should be clear,
to distinguish the two different kinds, we refer to them as $\diagup$-diagonals and $\diagdown$-diagonals.

\begin{defi}
 A skew shape is called a {\it ribbon} if it is connected and contains no $2\times 2$ square.
 Let $D$ and $E$ be two Dyck shapes of respective length $2n$ and $2n+2$, 
 then we denote $D\sqsubset E$ and say that $E$ is obtained from $D$ by {\it addition of a ribbon} if
 $D\subset E$ and the difference $E/D$ is a ribbon.
\end{defi}

In this definition, we assume that the two Dyck shapes are arranged so that their leftmost 
cells coincide (which is coherent with the way we defined the coordinates in the skew shape).

\begin{rema}
 This definition is easily translated in terms of binary words over $\nearrow$ and $\searrow$. Let
 $D$ a dyck word, then $D\sqsubset E$ if and only if $E$ is obtained from $D\searrow\searrow$ 
 by changing a $\searrow$ into a $\nearrow$ (and each step $\searrow$ of $D\searrow\searrow$ can
 be changed except the last one, so that there are $n+1$ possibilities for a path of length $2n$).
\end{rema}

Figure~\ref{addribbon} shows the ribbons that can be added to the Dyck path of Figure~\ref{dyck}. 
Note that the number of $\diagup$-diagonals of these ribbons are exactly the integers from 1 to 6. 
This is a general fact:

\begin{prop} \label{addribbonprop}
Let $D$ be a Dyck shape of length $2n$ and $1\leq i\leq n+1$. Then there is a unique way to add to $D$ 
a ribbon whose number of $\diagup$-diagonals is $i$.
\end{prop}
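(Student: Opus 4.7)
The plan is to combine the Remark with a direct cell-level computation. By the Remark, the Dyck shapes $E$ with $D\sqsubset E$ are in bijection with the $n+1$ choices of a $\searrow$ step of $D\searrow\searrow$ other than the last one; index these by the rank $m \in \{1,\dots,n+1\}$ of the changed $\searrow$, counted from the left. I will show that this $E$ yields a ribbon $E/D$ with exactly $n+2-m$ $\diagup$-diagonals. Since $m \mapsto n+2-m$ is a bijection of $\{1,\dots,n+1\}$, this proves existence and uniqueness simultaneously.

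For the main step I use the coordinate system introduced above: a cell $(x,y)$ (with $x+y$ even) belongs to the Dyck shape with upper boundary a Dyck path $P$ precisely when $y \geq 0$ and $y+1 \leq P(x+1)$, where $P(j)$ denotes the height of $P$ at position $j$. Let $h_j$ denote the heights of $D\searrow\searrow$ and let $k$ be the position of the changed $\searrow$, so that $E$ has heights $h_j$ for $j<k$ and $h_j+2$ for $j\geq k$. Comparing the membership conditions for $D$ and $E$ and imposing $x+y$ even (which selects a unique $y$ in each column where $E$ sits strictly above $D\searrow\searrow$), one finds that the ribbon cells are
\[
\bigl\{ (x,\, h_{x+1}+1) \,:\, k-1 \leq x \leq 2n \bigr\}.
\]
The endpoint $x=2n$ survives because $h_{2n+1}=-1$ yields $y=0$, while $x=2n+1$ is excluded since $h_{2n+2}=-2$. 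Consecutive cells differ by $(+1,\pm 1)$, so the set is a connected zigzag with no $2\times 2$ block, hence a genuine ribbon.

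It remains to count distinct $\diagup$-diagonals, i.e., distinct values of $y-x$. Writing $d_j$ for the number of $\searrow$ steps of $D\searrow\searrow$ at positions $\leq j$, we have $h_j=j-2d_j$, hence
\[
y - x \;=\; h_{x+1}+1-x \;=\; 2(1-d_{x+1}).
\]
As $x+1$ ranges over $\{k,k+1,\dots,2n+1\}$, the integer $d_{x+1}$ is non-decreasing with unit jumps (one per $\searrow$ of $D\searrow\searrow$), with $d_k=m$ by definition of $m$ and $d_{2n+1}=n+1$. Hence it attains each value in $\{m,\dots,n+1\}$, giving exactly $n+2-m$ distinct $\diagup$-diagonals, as claimed.

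The main obstacle is the cell-level bookkeeping in Japanese coordinates---in particular the parity constraint and the right-hand boundary where $D$'s path has ended but $E$'s has not. Once the ribbon is written in the form above, the diagonal count reduces to the monotonicity of $d_j$ along $D\searrow\searrow$ and yields the claimed bijection with $\{1,\dots,n+1\}$.
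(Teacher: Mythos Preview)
Your argument is correct. The paper itself offers no proof of this proposition beyond the sentence ``The result is clear upon inspection and does not deserve a detailed formal proof.'' You have supplied the missing details: using the word description from the Remark, you identify the ribbon cells explicitly as $\{(x,h_{x+1}+1):k-1\le x\le 2n\}$ and then count their $\diagup$-diagonals via the down-step counter $d_j$. The key identity $y-x=2(1-d_{x+1})$ cleanly reduces the diagonal count to the range of $d_j$ over $j\in\{k,\dots,2n+1\}$, and the monotone-with-unit-jumps observation gives exactly $n+2-m$ values. Since $m\mapsto n+2-m$ bijects $\{1,\dots,n+1\}$ onto itself, existence and uniqueness follow together. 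One small point you might make more explicit: the condition $y\ge 0$ holds for every ribbon cell, not just the rightmost one, because $h_{x+1}\ge 0$ whenever $x+1\le 2n$ (the original path stays nonnegative) and $h_{2n+1}=-1$ gives $y=0$; you state the boundary case but leave the interior implicit. Otherwise the proof is complete and considerably more informative than what the paper provides.
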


The result is clear upon inspection and does not deserve a detailed formal proof.

\begin{figure}[h!tp]   \centering 
  \begin{tikzpicture}[scale=0.3,thick]
    \draw[fill=lightgray] (12,0) -- (11,1) -- (10,0) -- (11,-1) -- (12,0);
    \draw (0,0) -- (2,2) -- (5,-1) -- (7,1) -- (9,-1) -- (11,1) -- (12,0) -- (11,-1) -- (9,1) -- (7,-1) -- (4,2) -- (1,-1) -- (0,0);
    \draw (1,1) -- (3,-1) -- (6,2); 
    \draw (4,2) -- (5,3) -- (7,1);
  \end{tikzpicture}
  \hspace{1cm}
  \begin{tikzpicture}[scale=0.3,thick]
    \draw[fill=lightgray] (12,0) -- (10,2) -- (9,1) -- (11,-1) -- (12,0);
    \draw (0,0) -- (2,2) -- (5,-1) -- (7,1) -- (9,-1) -- (11,1) -- (12,0) -- (11,-1) -- (9,1) -- (7,-1) -- (4,2) -- (1,-1) -- (0,0);
    \draw (1,1) -- (3,-1) -- (6,2); 
    \draw (4,2) -- (5,3) -- (7,1);
    \draw (11,1) -- (10,2) -- (9,1);
  \end{tikzpicture}
  \hspace{1cm}
  \begin{tikzpicture}[scale=0.3,thick]
    \draw[fill=lightgray] (12,0) -- (10,2) -- (9,1) -- (11,-1) -- (12,0);
    \draw[fill=lightgray] (7,1) -- (8,0) -- (10,2) -- (9,3) -- (7,1);
    \draw (0,0) -- (2,2) -- (5,-1) -- (7,1) -- (9,-1) -- (11,1) -- (12,0) -- (11,-1) -- (9,1) -- (7,-1) -- (4,2) -- (1,-1) -- (0,0);
    \draw (1,1) -- (3,-1) -- (6,2); 
    \draw (4,2) -- (5,3) -- (7,1);
    \draw (11,1) -- (10,2) -- (9,1);
    \draw (9,1) -- (8,2);
  \end{tikzpicture}
  
  \vspace{4mm}
  
  \begin{tikzpicture}[scale=0.3,thick]
    \draw[fill=lightgray] (12,0) -- (9,3) -- (8,2) -- (11,-1) -- (12,0);
    \draw[fill=lightgray] (6,2) -- (7,3) -- (9,1) -- (8,0) -- (6,2);
    \draw (0,0) -- (2,2) -- (5,-1) -- (7,1) -- (9,-1) -- (11,1) -- (12,0) -- (11,-1) -- (9,1) -- (7,-1) -- (4,2) -- (1,-1) -- (0,0);
    \draw (1,1) -- (3,-1) -- (6,2); 
    \draw (4,2) -- (5,3) -- (7,1);
    \draw (11,1) -- (10,2) -- (9,1);
    \draw (9,1) -- (8,2);
    \draw (7,1) -- (8,2);
  \end{tikzpicture}
  \hspace{1cm}
  \begin{tikzpicture}[scale=0.3,thick]
    \draw[fill=lightgray] (12,0) -- (9,3) -- (8,2) -- (11,-1) -- (12,0);
    \draw[fill=lightgray] (5,3) -- (6,4) -- (9,1) -- (8,0) -- (5,3);
    \draw (0,0) -- (2,2) -- (5,-1) -- (7,1) -- (9,-1) -- (11,1) -- (12,0) -- (11,-1) -- (9,1) -- (7,-1) -- (4,2) -- (1,-1) -- (0,0);
    \draw (1,1) -- (3,-1) -- (6,2); 
    \draw (4,2) -- (5,3) -- (7,1);
    \draw (11,1) -- (10,2) -- (9,1);
    \draw (9,1) -- (8,2);
    \draw (7,1) -- (8,2);
    \draw (7,3) -- (6,2);
  \end{tikzpicture}
  \hspace{1cm}
  \begin{tikzpicture}[scale=0.3,thick]
    \draw[fill=lightgray] (12,0) -- (9,3) -- (8,2) -- (11,-1) -- (12,0);
    \draw[fill=lightgray] (5,3) -- (6,4) -- (9,1) -- (8,0) -- (5,3);
    \draw[fill=lightgray] (6,4) -- (5,5) -- (2,2) -- (3,1) -- (6,4);
    \draw (0,0) -- (2,2) -- (5,-1) -- (7,1) -- (9,-1) -- (11,1) -- (12,0) -- (11,-1) -- (9,1) -- (7,-1) -- (4,2) -- (1,-1) -- (0,0);
    \draw (1,1) -- (3,-1) -- (6,2); 
    \draw (4,2) -- (5,3) -- (7,1);
    \draw (11,1) -- (10,2) -- (9,1);
    \draw (9,1) -- (8,2);
    \draw (7,1) -- (8,2);
    \draw (7,3) -- (6,2);
    \draw (3,3) -- (4,2);
    \draw (4,4) -- (5,3);
  \end{tikzpicture}
  
  \caption{Addition of a ribbon to a Dyck path. \label{addribbon} }
\end{figure}
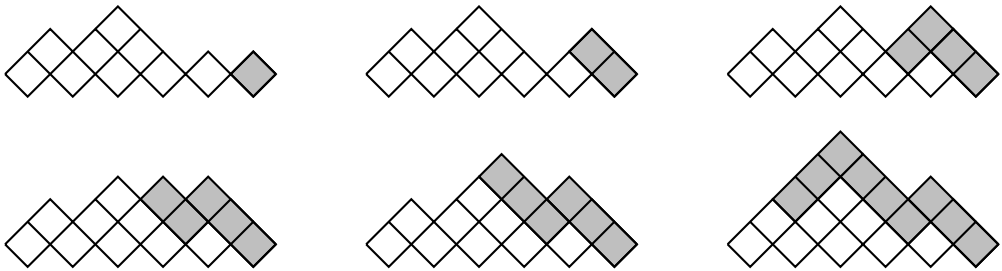

\begin{defi}
  An {\it $n$-chain of Dyck shapes} is a sequence $D_1\sqsubset D_2 \sqsubset \dots \sqsubset D_n$ 
  where $D_i$ is a Dyck shape of length $2i$.
  The biggest path $D_n$ is called the {\it shape} of the chain, and we say the chains ends at $D_n$.
\end{defi}

Note that from Proposition~\ref{addribbonprop}, the number of $n$-chains of Dyck shapes is $n!$.

When we have an $n$-chain $D_1\sqsubset D_2 \sqsubset \dots \sqsubset D_n$, the ribbons $D_{i}/D_{i-1}$ with $1\leq i \leq n$ 
(where we take the convention $D_0=\varnothing$) form a partition of $D_n$ as a set of cells.
We can thus represent an $n$-chain ending at $D$ as a partition of $D$ in $n$ ribbons, see Figure~\ref{chain} for an example.
Conversely, a Dyck shape $D$ of length $2n$ partitioned in $n$ ribbons arise in this way if it satisfies the following condition:
the right extremities of the ribbons are exactly the $n$ cells in the bottom row of $D$.

\begin{prop}
  Let $R$ be a rook placement in $2\delta_{n-1}$. Let $R_i$ be the rook placement in $2\delta_i$ obtained by keeping only 
  the $i$ top rows of $R$ (by convention, $R_0$ is empty so that $d(R_0)= \delta_1 / \varnothing $, the unique
  Dyck shape of length 2, and $R_{n-1}=R$). 
  Then $d(R_0) , \dots , d(R_{n-1})$ is an $n$-chain of Dyck shapes. This defines a bijection between 
  rook placements in $2\delta_{n-1}$ and $n$-chains of Dyck shapes.
\end{prop}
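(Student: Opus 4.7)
The plan is to verify two things in turn: first, that each consecutive pair in the sequence satisfies $d(R_{i-1})\sqsubset d(R_i)$, so that $(d(R_0),\dots,d(R_{n-1}))$ is indeed an $n$-chain; and second, that the resulting map from rook placements to chains is a bijection.

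To establish $d(R_{i-1})\sqsubset d(R_i)$, I would track carefully what the passage from $R_{i-1}$ to $R_i$ does to the associated Dyck word. The transition amounts to attaching, at the bottom of $R_{i-1}$, a new row of length $2i$ containing a single dot in some column $j\in\{1,\dots,2i\}$; by the rook condition, $j$ does not coincide with any column of $R_{i-1}$ already containing a dot. By the definition of $d$, the only effect on the Dyck word is that the step associated to column $j$ becomes $\nearrow$, while all steps associated to columns $1,\dots,2i-2$ other than $j$ are unchanged. A letter-by-letter comparison of the two words of length $2i+2$ then shows that $d(R_i)$ and $d(R_{i-1})\searrow\searrow$ agree everywhere except at position $j+1$, where the former has a $\nearrow$ and the latter a $\searrow$. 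By the Remark following the definition of $\sqsubset$, this is precisely the relation $d(R_{i-1})\sqsubset d(R_i)$.

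The same analysis already provides an inverse and thereby the bijection. Given a chain $D_1\sqsubset\cdots\sqsubset D_n$, one reconstructs $R$ row by row from top to bottom: the column of the dot in the $i$-th row from the top of $R$ is $j_i = p_i-1$, where $p_i$ is the unique position at which $D_{i-1}\searrow\searrow$ and $D_i$ disagree. The resulting placement automatically satisfies the at-most-one-dot-per-column condition, because the $\searrow$ being changed into $\nearrow$ encodes a column that was previously empty. Alternatively, a counting argument would suffice: both sets have cardinality $n!$ (the left by the inductive argument recalled just after the definition of rook placements, the right by iterating Proposition~\ref{addribbonprop} along the chain), so injectivity, which is immediate from the reconstruction, is enough.

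The main obstacle is the case analysis in the first step. The three subcases $j\le 2i-2$, $j=2i-1$, and $j=2i$ correspond respectively to modifying a step strictly inside $d(R_{i-1})$, modifying its final $\searrow$, or modifying one of the two appended $\searrow$'s of $d(R_{i-1})\searrow\searrow$. All three subcases collapse into the uniform description ``position $j+1$'' once the words $d(R_{i-1})\searrow\searrow$ and $d(R_i)$ are aligned, after which the proposition follows cleanly.
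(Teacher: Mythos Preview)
Your proposal is correct and follows essentially the same route as the paper: compare the Dyck words of $d(R_{i-1})$ and $d(R_i)$ to see that exactly one $\searrow$ flips to $\nearrow$ (at position $j+1$), then invert by reading off the position of the flip, and conclude bijectivity via the cardinality match. Your treatment is a bit more explicit than the paper's---you invoke the Remark directly and spell out the three subcases $j\le 2i-2$, $j=2i-1$, $j=2i$ before collapsing them, whereas the paper only distinguishes ``dot at the right extremity'' from ``otherwise''---but the argument is the same.
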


\begin{proof}
Let us compare $d(R_{i-1})$ and $d(R_i)$. If the dot in the $i$th row of $R_i$ is on the right extremity, 
$d(R_i)$ is obtained from $d(R_{i-1})$ by adding $\nearrow\searrow$ at the end. In the other cases,
since only one dot is added from $R_{i-1}$ to $R_i$, $d(R_i)$ as a Dyck word is obtained from $d(R_{i-1})$ by 
replacing a $\searrow$ with a $\nearrow$ and adding two $\searrow$
at the end. This shows $d(R_{i-1})\sqsubset d(R_i)$, so $d(R_0) , \dots , d(R_{n-1})$ is indeed a chain of Dyck shapes.

We can recover the rook placement $R$ from the chain of Dyck shapes. Indeed, let $j$ be the index of the first
step where the two paths $d(R_{i-1})$ and $d(R_i)$ differs, or $j=2i+1$ if $d(R_{i-1})$ is a prefix of $d(R_i)$.
Then there is a dot in the $(j-1)$st cell of the $i$th row of $R$.

Since both sets have the same cardinality, what we have defined is a bijection.
\end{proof}

\begin{figure}[h!tp]   \centering 
  \begin{tikzpicture}[scale=0.3,thick]
    \draw (6,4) -- (7,5) -- (12,0) -- (11,-1) -- (5,5) -- (0,0) -- (1,-1) -- (5,3) -- (9,-1) -- (10,0);
    \draw (1,1) -- (3,-1) -- (4,0);
    \draw (3,1) -- (5,-1) -- (6,0);
    \draw (4,2) -- (7,-1) -- (8,0);
  \end{tikzpicture}
  \hspace{2cm}
  \begin{tikzpicture}[scale=0.3,thick]
   \draw (8,0)-- (7,-1) -- (4,2) -- (1,-1) -- (0,0) -- (4,4) -- (9,-1) -- (11,1) -- (13,-1) -- (14,0) -- (11,3) -- (10,2) -- (8,4) -- (6,2);
   \draw (1,1) -- (3,-1) -- (4,0);
   \draw (3,1) -- (5,-1) -- (6,0);
   \draw (7,1) -- (8,2) -- (11,-1) -- (12,0);
  \end{tikzpicture}
  \caption{The chains of Dyck shapes corresponding to rook placements in Figure~\ref{rook}. \label{chain}}
\end{figure}
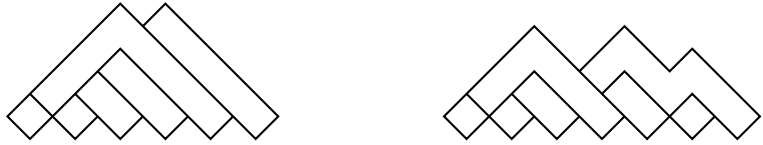

\section{The poset of Dyck paths}

\label{sec:4}

The transitive and reflexive closure of the relation $\sqsubset$ defines the structure of a graded poset on the set of 
Dyck paths of all possible lengths. The rank of a path is half its length, for example the minimal
Dyck path (i.e., the empty path) has rank 0, and $\nearrow\searrow$ has rank 1.
In terms of this poset, what we call an $n$-chain of shape $D$ is a saturated chain of $n$ elements starting at 
$\nearrow\searrow$ and arriving at $D$.
Let $(\mathcal{D},\leq)$ denote this poset, the goal of this section is to prove that it is lattice.

Let $D$ be a Dyck path. Its {\it Dyck word} is the infinite binary word $(d_i)_{i\geq 1}$ obtained
from $D$ by replacing $\nearrow$ with $1$, $\searrow$ with $0$, and appending an infinite sequence of $0$'s.
Note that the rank of a Dyck path is the number of 1's in the corresponding Dyck word.
Though seemingly useless, appending the final sequence of $0$'s slightly simplifies some statements such as
the next proposition.

The height in a Dyck path $D$ after $i$ steps is the number of $1$'s minus the number of $0$'s among $d_1,\dots,d_i$.
We have $d_i\geq 0$ for $i\leq 2n$ where $n$ is the rank of the path and $d_i<0$ for $i>2n$.
We also define the height similarly for a binary word which is not {\it a priori} a Dyck word.

\begin{prop}
 Let $D,E\in\mathcal{D}$ and $(d_i)_{i\geq1}, (e_i)_{i\geq1}$ their Dyck words.
 We have $D\leq E$ if and only if $d_i\leq e_i $ for all $i$.
\end{prop}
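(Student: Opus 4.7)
The plan is to prove the two implications separately. For the forward direction, I would induct on the length of a saturated $\sqsubset$-chain from $D$ to $E$, so it suffices to handle the single cover case. If $D\sqsubset D'$, then by the remark following the definition of $\sqsubset$, $D'$ arises from $D\searrow\searrow$ by flipping a $\searrow$ into a $\nearrow$. In terms of the infinite Dyck word (appending $\searrow\searrow$ to $D$ leaves the word unchanged past position $2n$, since those entries are already $0$), this amounts to changing exactly one coordinate from $0$ to $1$, which clearly preserves componentwise domination.

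For the reverse direction, I would induct on $r(E)-r(D)$, where $r(\cdot)$ denotes the rank (i.e.\ the number of $1$'s in the infinite Dyck word). This difference is nonnegative because $d\leq e$ componentwise, and the base case $r(D)=r(E)$ forces the set of positions carrying a $1$ in $d$ to coincide with the corresponding set for $e$, hence $D=E$. For the inductive step, set $n=r(D)<r(E)=:m$. The plan is to exhibit a Dyck path $D'$ with $D\sqsubset D'$ whose Dyck word $d'$ still satisfies $d'\leq e$; the induction hypothesis applied to $(D',E)$ then yields $D'\leq E$, and combined with $D\sqsubset D'$ we conclude $D\leq E$. To produce $D'$, I claim that there exists an index $j\leq 2n+1$ with $d_j=0$ and $e_j=1$. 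Assuming the claim, let $D'$ be the path whose infinite word $d'$ coincides with $d$ except that $d'_j=1$. A direct height computation (adding $2$ to every height from position $j$ onwards in $D\searrow\searrow$) shows that $D'$ is a valid Dyck path of length $2n+2$ and that the ribbon condition $D\sqsubset D'$ holds, the constraint $j\leq 2n+1$ being precisely the ``not the last $\searrow$'' condition from the remark. Moreover $d'\leq e$ because $d'_j=1=e_j$ and all other coordinates of $d'$ equal those of $d$.

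The main obstacle is the existence of the index $j$, which I would prove by contradiction. Suppose instead that $d_j=e_j$ for every $j\leq 2n+1$. Since $D$ is a Dyck path of length $2n$, its height at position $2n$ is $0$, and $d_{2n+1}=0$ forces the height of $d$ at position $2n+1$ to equal $-1$; by the supposition, the height of $e$ at position $2n+1$ also equals $-1$. On the other hand, $E$ is a Dyck path of length $2m\geq 2n+2$, so the height of its word at every position up to $2m$ must be nonnegative. Since $2n+1<2m$, this is a contradiction, establishing the claim and completing the proof.
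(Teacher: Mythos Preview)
Your argument is correct. Both you and the paper prove the forward direction by observing that a single cover $D\sqsubset D'$ amounts to flipping one $0$ to a $1$ in the infinite Dyck word, and then proceed by induction on the rank difference for the reverse direction. The genuine difference lies in the inductive step: the paper descends from $E$ by taking the \emph{maximal} index $M$ with $d_M<e_M$ and turning that $1$ in $e$ back into a $0$ to produce an intermediate $E'$ with $D\leq E'\sqsubset E$, whereas you ascend from $D$ by flipping a $0$ of $d$ (at some position $j\leq 2n+1$ where $e_j=1$) to obtain $D'$ with $D\sqsubset D'\leq E$.

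Your route is arguably cleaner. Verifying that $D'$ is a Dyck path of length $2n+2$ is immediate, since flipping a $0$ to a $1$ only raises heights, and the constraint $j\leq 2n+1$ is exactly the ``not the last step'' condition for $\sqsubset$; your contradiction argument for the existence of such a $j$ is short and transparent. By contrast, the paper must work a bit harder to see that the descended word $E'$ remains a Dyck word: removing a $1$ can \emph{lower} heights, so one needs the observation that $h_M(E)-h_M(D)$ equals twice the rank difference (hence is at least $4$ when that difference is $\geq 2$), together with a separate remark that $E'=D$ when the rank difference is exactly $1$. The paper's choice of the maximal disagreeing index is what makes this bookkeeping manageable; your approach sidesteps it entirely.
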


\begin{proof}
It is clear that if $D_1$, $D_2$ are Dyck words, $D_1 \sqsubset D_2$ means 
that $D_2$ can be obtained from $D_1$ by changing a $0$ into a $1$.
So $D\sqsubset E$ implies $d_i\leq e_i $ for all $i$.
We deduce that $D\leq E$ also implies this condition.

It remains to show the other implication. So we suppose now $d_i\leq e_i $ for all $i$,
and we will show $D\leq E$ by induction on the difference of ranks.

The initial case is clear: if $D$ and $E$ have the same rank, we get $d_i=e_i$ for all $i$
so $D=E$ and in particular $D\leq E$.
Otherwise, let $M$ be the maximal integer such that $d_M<e_M$. In particular, $d_M=0$ and $e_M=1$.
We define $E'=(e'_i)$ by changing this $e_M=1$ into a $0$. 
If $E'$ is indeed a Dyck word, we have $D\leq E'$ by using the induction hypothesis.
Also $E'\sqsubset E$ by construction, so $E'\leq E$, and it follows $D\leq E$. 

So it remains to show that $E'$ is a Dyck word. If the difference of ranks is $1$, we have $E'=D$ 
so it is a Dyck word. So we can assume that the difference of ranks is at least 2. 
From $d_i \leq e_i$, we get that $h_i(E) - h_i(D)$ is increasing with $i$.
We have that $h_M(E) - h_M(D)$ is twice the difference of rank of $E$ and $D$, i.e., at least $4$.
So $h_M(E') - h_M(D)$ is at least $2$. So $h_i(E') = h_i(D) + 2 $ for $i\geq M$, so it is nonnegative as long
as $h_M(D) \geq -2 $. Now let $M'$ be the minimal integer with $h_{M'}(E')=0$. We easily obtain
$d_i=e_i=0$ for $i\geq M'$ and the fact that $E'$ is a Dyck word.
\end{proof}

\begin{prop}
$\mathcal{D}$ is a lattice. 
\end{prop}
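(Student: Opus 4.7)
The plan is to use the pointwise characterization of the order in the previous proposition: $D \leq E$ iff $d_i \leq e_i$ for all $i$. This strongly suggests that the join $D \vee E$ should be the Dyck path whose Dyck word is the componentwise maximum $f_i := \max(d_i, e_i)$. First I would verify that $(f_i)$ is indeed a Dyck word. Finiteness of the number of $1$'s is clear, since this number is at most $n_D + n_E$. For the height condition, observe that each time $f_j$ strictly exceeds $d_j$ a $0$ is replaced by a $1$, which increases $h_i(f) - h_i(d)$ by $2$; hence $h_i(f) \geq h_i(d)$ and symmetrically $h_i(f) \geq h_i(e)$ for every $i$. Now let $N$ be the largest index with $f_N = 1$ (if $(f_i)$ is identically $0$ there is nothing to check), and without loss of generality $d_N = 1$, so $N \leq 2 n_D$. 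For $i \leq N$ this gives $h_i(f) \geq h_i(d) \geq 0$. For $N < i \leq 2 n_F$, the suffix $(f_{N+1}, f_{N+2}, \dots)$ is all $0$, and since all $n_F$ ones of $f$ lie in positions $\leq N$, one computes $h_i(f) = 2 n_F - i \geq 0$. This confirms that $F$ is a Dyck path; by the previous proposition $F$ is an upper bound of $\{D, E\}$, and any other upper bound must dominate $F$ componentwise, so $F = D \vee E$.

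For meets I would proceed indirectly. Given $D, E \in \mathcal{D}$, let $S = \{ C \in \mathcal{D} : C \leq D \text{ and } C \leq E \}$. This set is nonempty since the empty Dyck path lies in $S$, and it is finite since any $C \in S$ has rank at most $\min(n_D, n_E)$, leaving only finitely many possibilities. By iterating the binary join already constructed, $G := \bigvee S$ exists in $\mathcal{D}$; because both $D$ and $E$ are upper bounds of $S$, we have $G \leq D$ and $G \leq E$, so $G \in S$. Therefore $G = \max S = D \wedge E$.

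The only substantive obstacle is the closure of Dyck words under componentwise $\max$, and this reduces to the short case analysis above on the position of $i$ relative to $N$ and $2 n_F$. Everything else is formal: the pointwise characterization of $\leq$ makes binary joins automatic, and the finiteness of lower cones upgrades binary joins to binary meets.
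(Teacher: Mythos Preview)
Your argument is correct. The construction of the join via the componentwise maximum is exactly the paper's approach, though your height verification is more detailed than the paper's one-line remark that $D\vee E$ arises from the longer of the two paths by flipping some $\searrow$'s to $\nearrow$'s. Where you genuinely diverge is in the meet: the paper builds $D\wedge E$ explicitly by taking the componentwise minimum $c_i=\min(d_i,e_i)$, observing that this need not be a Dyck word, and then truncating at the first index where the partial height of $c$ becomes negative (padding with $0$'s thereafter). Your route is instead the standard order-theoretic trick: the set of common lower bounds is nonempty (it contains the empty path) and finite (ranks are bounded by $\min(n_D,n_E)$), so its join, already available, is the desired meet. Your argument is shorter and entirely formal once joins are in hand; the paper's has the advantage of producing a concrete description of $D\wedge E$, which may be useful if one later wants to reason about meets directly.
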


\begin{proof}
Let $D$ and $E$ be two Dyck path. To show the existence of the join operation,
let us define a Dyck path $D \vee E$ by:
\[
  (D\vee E)_i = \max(d_i,e_i).
\]
Using the previous lemma,
it is clear that $D\leq D\vee E$, $E\leq D\vee E$, and $D\leq F$, $E\leq F$ implies
$D\vee E \leq F$. So we only have to show that $D\vee E$ is indeed a Dyck path with
this definition. Suppose for example that the length of $D$ is bigger than that of $E$.
We see from the definition that $D\vee E$ is obtained from $D$ by changing some steps
$\searrow$ into $\nearrow$, so it is clear that the new path cannot go below height $0$.

Now, we show the existence of the meet operation $D\wedge E$.
First, consider the sequence $c$ defined by
\[
  c_i = \min(d_i,e_i).
\]
It might not be a Dyck word, for example if $D=110010...$ and $E=101010...$ (where 
the dots represent the infinite sequence of $0$'s).
So, let us consider the smallest prefix $c'$ of $c$ containing more $0$'s than $1$'s.
We define $D\wedge E$ as the Dyck word $c'...$ (where the dots represent the infinite sequence of $0$'s).
It is clear that $D\wedge E \leq D$, $D\wedge E \leq E$. Suppose that $F\leq D$ and $F\leq E$ for some
Dyck path $F=(f_i)$. We have $f_i\leq c_i$, so the path $F$ goes below height $0$ after $|c'|$ steps. 
We deduce that $f_i=0$ when $i$ is greater than $|c'|$. So $F\leq D\wedge E$.
\end{proof} 

\section{Permutations and their profiles}

\label{sec:5}

As mentioned above, the number of $n$-chains of Dyck shapes is $n!$, and the goal of this section 
is to give a bijection with permutations. It is related with the notion of the {\it profile}
of a permutation as defined by Françon and Viennot \cite{francon}, i.e., a Dyck path encoding the location of 
peaks, valleys, double ascents and double descents, as defined below
(the term {\it shape} was maybe more close to the french ``forme'', but it was already in use in this paper). 
This notion of profile gives another interpretation of the relation $\sqsubset$ (addition of a ribbon) in Dyck paths,
giving a natural bijection between chains of Dyck shapes and permutations.

\begin{defi}
 Let $\sigma\in\mathfrak{S}_n$. We will take the convention that $\sigma_0=0$ and $\sigma_{n+1}=n+1$.
 The integer $\sigma_i\in\{1,\dots,n\}$ is called:
\begin{itemize}
 \item a {\it peak} if $\sigma_{i-1} < \sigma_i > \sigma_{i+1} $, 
 \item a {\it valley} if $\sigma_{i-1} > \sigma_i < \sigma_{i+1} $, 
 \item a {\it double ascent} if $\sigma_{i-1} < \sigma_i < \sigma_{i+1} $, 
 \item a {\it double descent} if $\sigma_{i-1} > \sigma_i > \sigma_{i+1} $.
\end{itemize}
\end{defi}

For example, if $\sigma=42135$, $1$ is a valley, $2$ is a double descent, $3$ and $5$ are double ascents, $4$ is a peak.
Note that in this definition, the value $\sigma_i$, not the index $i$, is a peak if $\sigma_{i-1} < \sigma_i > \sigma_{i+1} $
(and similarly for the other statistics). This is not coherent with the usual definition of a descent and ascent
(an index $i$ such that $\sigma_i > \sigma_{i+1}$ or $\sigma_i < \sigma_{i+1}$), but it is nonetheless standard.

\begin{defi}
The {\it profile} of a permutation $\sigma\in\mathfrak{S}_n$ is a Dyck path $\Delta(\sigma)$ of length $2n$, 
defined as follows. We see it as a binary word $w=w_1\cdots w_{2n}$ over the alphabet $\{\nearrow,\searrow\}$, such that
the factor $w_{2i-1}w_{2i}$ is :
\begin{itemize}
 \item $\nearrow\nearrow$ if $i$ is a valley of $\sigma$,
 \item $\searrow\searrow$ if $i$ is a peak of $\sigma$,
 \item $\nearrow\searrow$ if $i$ is a double ascent of $\sigma$,
 \item $\searrow\nearrow$ if $i$ is a double descent of $\sigma$.
\end{itemize}
\end{defi}

In the previous example $\sigma=42135$, we get 
$\Delta(\sigma) = \nearrow\nearrow \searrow\nearrow \nearrow\searrow \searrow\searrow \nearrow\searrow $
(which is as well the Dyck path represented in Figure~\ref{dyck}).

\begin{rema}
This notion of profile of a permutation and the associated Dyck path $\Delta(\sigma)$ has also been introduced
by Brändén and Leander \cite{brandenleander}, in slightly different terms.
\end{rema}

Let $\sigma\in\mathfrak{S}_n$ be a permutation, considered as a word $\sigma_1\cdots \sigma_n$. 
We define $\sigma'\in\mathfrak{S}_{n-1}$ as what we obtain after removing $n$.
For example, $(1457236)'=145236$.
More generally, by mimicking the notation for derivatives, we define $\sigma^{(0)}=\sigma$
and $\sigma^{(i+1)} = (\sigma^{(i)})'$.

The goal of this section is to prove:

\begin{theo} \label{bijchain}
The map 
\begin{equation}  \label{bijpermchain}
 \sigma \mapsto \big( \Delta(\sigma^{(n-1)}) ,\dots, \Delta(\sigma''),  \Delta(\sigma') , \Delta(\sigma) \big)
\end{equation}
defines a bijection between $\mathfrak{S}_n$ and $n$-chains of Dyck shapes.
\end{theo}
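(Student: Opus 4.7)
The plan is to proceed by induction on $n$, with the core technical step being the following local claim: for every $\sigma\in\mathfrak{S}_n$ with $n\geq 2$, if $\sigma'$ denotes $\sigma$ with the entry $n$ deleted, then $\Delta(\sigma')\sqsubset\Delta(\sigma)$, and moreover the resulting map from the $n$ possible insertion positions of $n$ into $\sigma'$ to the $n$ possible ribbons addable to $\Delta(\sigma')$ is a bijection. Granting this, iterating shows that \eqref{bijpermchain} is indeed an $n$-chain of Dyck shapes, and a straightforward induction together with the counting identity $|\mathfrak{S}_n|=n!$ (which equals the number of $n$-chains by iterating Proposition~\ref{addribbonprop}) upgrades injectivity into a full bijection.

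To establish the local claim, I would fix $\sigma'\in\mathfrak{S}_{n-1}$ and form $\sigma$ by inserting $n$ at position $k\in\{1,\dots,n\}$; set $a=\sigma_{k-1}$ and $b=\sigma_{k+1}$, using the conventions $\sigma_0=0$, $\sigma_{n+1}=n+1$, and the analogous ones for $\sigma'$. Since the inserted value $n$ is maximal, every value other than $a$, $b$, and $n$ itself has the same left and right neighbours in $\sigma$ and in $\sigma'$, hence the same type and the same two-letter encoding. Using the characterisation ``the first (resp.\ second) letter of the encoding of $v$ is $\nearrow$ iff the left (resp.\ right) neighbour of $v$ exceeds $v$'', a short case analysis on the sign of $b-a$, together with the boundary cases $k\in\{1,n\}$, shows that $\Delta(\sigma)$ is obtained from $\Delta(\sigma')\searrow\searrow$ by flipping exactly one $\searrow$ into a $\nearrow$. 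By the remark following Proposition~\ref{addribbonprop}, this is precisely the relation $\Delta(\sigma')\sqsubset\Delta(\sigma)$.

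For the bijective part of the local claim, the case analysis packages uniformly: the flipped position is $2b-1$ when $a<b$ and $2a$ when $a>b$, where $(a,b)$ ranges over the $n$ consecutive pairs of the extended sequence $(0,\sigma'_1,\dots,\sigma'_{n-1},n)$. The odd flipped positions $2v-1$ are then indexed by the ``ascents into $v$'' of this extended sequence, and the even flipped positions $2v$ by the ``descents out of $v$''; since $\sigma'$ is a permutation, each value $v$ occurs exactly once as a predecessor and at most once as a successor, so different $k$'s produce different flipped positions. This yields injectivity, and by counting (both sides have cardinality $n$) the map is bijective onto the set of ribbons addable to $\Delta(\sigma')$.

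The main obstacle I anticipate is the case analysis in the second step, in particular reconciling the conventions for the ``virtual'' neighbours at the ends of $\sigma$ and $\sigma'$ in the boundary cases $k=1$ (where $n$ becomes the first entry) and $k=n$ (where it becomes the last, and hence contributes a double ascent rather than a peak). Once that analysis is carried out cleanly, the remainder of the proof is a matter of bookkeeping and counting.
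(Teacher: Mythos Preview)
Your approach is essentially the same as the paper's: the paper isolates the same key lemma ($\Delta(\sigma')\sqsubset\Delta(\sigma)$, together with a rule to read off the insertion position of $n$ from the index of the unique flipped step), proves it by the same case analysis on the factor $a,n,b$, and then describes the inverse map explicitly. One correction to your writeup: with the paper's encoding, the first letter at value $v$ records the comparison with the \emph{right} neighbour and the second letter the comparison with the \emph{left} neighbour, so the flipped position is $2b$ when $a<b$ and $2a-1$ when $a>b$ (the paper states this as ``if $k$ is even, $n$ is inserted just before $k/2$; if $k$ is odd, $n$ is inserted just after $(k+1)/2$''); your formulas $2b-1$ and $2a$ come from swapping the roles of left and right in your characterisation, but once that is fixed the rest of your argument goes through unchanged.
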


The inverse bijection will be described explicitly below. 
Anticipating a little, let us announce that the permutations
\[
 \tau = 513462 \quad \text{ and } \quad \upsilon = 5471326
\]
correspond to the chains of Dyck shapes in Figure~\ref{chain}.
We can check for example that
\begin{align*}
\Delta( \tau^{(0)} ) &= \Delta( 513462 ) = \nearrow\nearrow \nearrow\nearrow \nearrow\searrow \nearrow\searrow \searrow\searrow \searrow\searrow, \\
\Delta( \tau^{(1)} ) &= \Delta( 51342 ) = \nearrow\nearrow \nearrow\nearrow \nearrow\searrow \searrow\searrow \searrow\searrow, \\
\Delta( \tau^{(2)} ) &= \Delta( 1342 ) = \nearrow\searrow \nearrow\nearrow \nearrow\searrow \searrow\searrow, \\
\Delta( \tau^{(3)} ) &= \Delta( 132 ) =  \nearrow\searrow \nearrow\nearrow \searrow\searrow, \\
\Delta( \tau^{(4)} ) &= \Delta( 12 ) =   \nearrow\searrow \nearrow\searrow, \\
\Delta( \tau^{(5)} ) &= \Delta( 1 ) =    \nearrow\searrow, 
\end{align*}
which corresponds to the first chain of Dyck paths in Figure~\ref{chain}.

The proof of Theorem~\ref{bijchain} and the definition of the inverse bijection will follow from the lemma below.

\begin{lemm}
 Let $\sigma\in\mathfrak{S}_n$, then we have $\Delta(\sigma') \sqsubset \Delta(\sigma)$.
If $\Delta(\sigma')$ is a prefix of $\Delta(\sigma)$, then $\sigma$ is obtained from $\sigma'$ by inserting $n$ in 
the last position. Otherwise, let $k$ be the index of the first step where the paths $\Delta(\sigma')$ and $\Delta(\sigma)$ 
differ, then:
\begin{itemize}
 \item if $k$ is odd, $\sigma$ is obtained from $\sigma'$ by inserting $n$ just after $\frac{k+1}2$,
 \item if $k$ is even, $\sigma$ is obtained from $\sigma'$ by inserting $n$ just before $\frac k2$.
\end{itemize}
\end{lemm}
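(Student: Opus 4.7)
The plan is to analyze, case by case on the position of $n$ in $\sigma$, which letters of the profile change upon removal of $n$. The key preliminary observation, from inspecting the four cases in the definition of $\Delta$, is that each letter in the factor $w_{2i-1}w_{2i}$ of $\Delta(\sigma)$ encodes a single local comparison: $w_{2i-1} = \nearrow$ iff the right neighbor of $i$ exceeds $i$, and $w_{2i} = \nearrow$ iff the left neighbor of $i$ exceeds $i$. This decoupling will make the subsequent case analysis very clean.

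Let $j$ be the position of $n$ in $\sigma$, and set $a = \sigma_{j-1}$, $b = \sigma_{j+1}$ (with the conventions $\sigma_0 = 0$ and $\sigma_{n+1} = n+1$). For any value $i \notin \{a, b, n\}$, both neighbors of $i$ are unchanged when passing from $\sigma$ to $\sigma'$, so the factor for $i$ agrees in $\Delta(\sigma)$ and $\Delta(\sigma')$. For the value $n$ itself, one checks directly that $n$ is a peak of $\sigma$ when $j < n$ (factor $\searrow\searrow$) and a double ascent when $j = n$ (factor $\nearrow\searrow$, because $\sigma_{n+1} = n+1 > n$). For the two neighbors $a$ and $b$, only the right neighbor of $a$ and the left neighbor of $b$ are affected: in $\sigma$ both equal $n$, which forces the first letter of $a$'s factor and the second letter of $b$'s factor to be $\nearrow$; in $\sigma'$ they become $b$ and $a$ respectively, and comparing with $a$ and $b$ gives two subcases. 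If $a < b$ then the first letter of $a$'s factor stays $\nearrow$ and only the second letter of $b$'s factor flips to $\searrow$; if $a > b$ then the second letter of $b$'s factor stays $\nearrow$ and only the first letter of $a$'s factor flips to $\searrow$. In either subcase, exactly one letter in the first $2(n-1)$ positions of the profile changes, and it flips from $\nearrow$ in $\Delta(\sigma)$ to $\searrow$ in $\Delta(\sigma')$.

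Combining these contributions proves both assertions. When $j = n$, no letter in the first $2(n-1)$ positions changes (the new right neighbor of $a$ in $\sigma'$ is the virtual value $n$, still greater than $a$), so $\Delta(\sigma')$ is a prefix of $\Delta(\sigma)$, and $\Delta(\sigma)$ is obtained from $\Delta(\sigma')\searrow\searrow$ by flipping the letter at position $2n-1$. When $j < n$, $\Delta(\sigma)$ is obtained from $\Delta(\sigma')\searrow\searrow$ by flipping the single letter at a position $k \leq 2(n-1) < 2n$. In both cases the flipped position is not the last one, so the remark following Proposition~\ref{addribbonprop} gives $\Delta(\sigma') \sqsubset \Delta(\sigma)$. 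It remains to read off the insertion position of $n$ from $k$ in the non-prefix case: if $a > b$ then $k = 2a-1$ is odd and $a = (k+1)/2$ is the left neighbor of $n$ in $\sigma$; if $a < b$ then $k = 2b$ is even and $b = k/2$ is the right neighbor of $n$ in $\sigma$. This matches the insertion rule of the lemma. The only point requiring care is the consistent handling of the boundaries $j = 1$ (where $a = 0$ is virtual, so $a < b$ holds automatically) and $j = n$ (the prefix case), but in both the formulas apply without adjustment.
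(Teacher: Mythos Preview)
Your proof is correct and follows essentially the same approach as the paper: a case analysis on the relative order of the two neighbors $a,b$ of $n$, showing that exactly one step of the profile flips from $\searrow$ to $\nearrow$, at position $2b$ when $a<b$ and at position $2a-1$ when $a>b$. Your preliminary decoupling observation (that $w_{2i-1}$ records only the comparison with the right neighbor and $w_{2i}$ only the comparison with the left neighbor) is a clean way to organize what the paper does by enumerating the possible type changes of $a$ and $b$, but the substance of the argument is the same.
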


\begin{proof}
Let us compare $\Delta(\sigma')$ and $\Delta(\sigma)$. If $n$ is at the end of $\sigma$, then $\Delta(\sigma)$
is obtained from $\Delta(\sigma')$ by adding $\nearrow\searrow$ at the end, corresponding to the double ascent $n$.
So $\Delta(\sigma') \sqsubset \Delta(\sigma)$ (this is the addition of a ribbon with one cell).

Otherwise, $n$ is between two other integers, i.e., there is in $\sigma$ a factor $i,n,j$ (with the convention
that $i=0$ if $n$ is at the beginning of $\sigma$). 
We suppose first that $i<j$. It means that either 
\begin{itemize}
 \item $j$ is a double ascent in $\sigma'$ and a valley in $\sigma$, or
 \item $j$ is a peak in $\sigma'$ and a double descent in $\sigma$.
\end{itemize}
On the Dyck paths, it means that the $j$th pair of steps is either $\nearrow\searrow$ in $\sigma'$ and 
$\nearrow\nearrow$ in $\sigma$, or $\searrow\searrow$ in $\sigma'$ and $\searrow\nearrow$ in $\sigma$,
i.e., in each case the $(2j)$th step is $\searrow$ in $\sigma'$ and $\nearrow$ in $\sigma$.
It is easily checked that the other steps do not change. So $\Delta(\sigma') \sqsubset \Delta(\sigma)$.
Suppose then that $i>j$. It means that either 
\begin{itemize}
 \item $i$ is a double descent in $\sigma'$ and a valley in $\sigma$, or
 \item $i$ is a peak in $\sigma'$ and a double ascent in $\sigma$.
\end{itemize}
On the Dyck paths, it means that the $i$th pair of steps is either $\searrow\nearrow$ in $\sigma'$ and 
$\nearrow\nearrow$ in $\sigma$, or $\searrow\searrow$ in $\sigma'$ and $\nearrow\searrow$ in $\sigma$,
i.e., in each case the $(2i-1)$st step is $\searrow$ in $\sigma'$ and $\nearrow$ in $\sigma$.
It is easily checked that the other steps do not change. So $\Delta(\sigma') \sqsubset \Delta(\sigma)$.

Note that the cases $i<j$ and $i>j$ we considered can be distinguished via the parity of the 
changed step between $\Delta(\sigma')$ and $\Delta(\sigma)$. Thus we can finish the proof of
the proposition: in the first case, we can find $j$ from the index of the changed step and we know 
that $n$ is just before $j$, and in the second case, we can find $i$ from the index of the changed step
and we know that $n$ is just after $i$.
\end{proof}

The inverse bijection of \eqref{bijpermchain} can be described explicitly. Let $(D_1,\dots,D_n)$ be an $n$-chain of Dyck paths.
We build a permutation $\sigma\in\mathfrak{S}_n$ by inserting successively $1,2,\dots,n$.
Start from $1\in\mathfrak{S}_1$, corresponding to the 1-chain of Dyck paths $(\nearrow\searrow)$.
Suppose we have already inserted $1,\dots,j$, then we insert
$j+1$ as follows. Let $i\geq1$ be such that the leftmost cell of the ribbon $D_{j+1} / D_j $
is in the $i$th column. If $i=2j+1$, which means the ribbon $D_{j+1} / D_j $ contains a single square, 
then $j+1$ is inserted in last position. Otherwise:
\begin{itemize}
 \item if $i$ is even and $i=2i'$, then $j+1$ is inserted to the left of $i'$,
 \item if $i$ is odd and $i=2i'-1$, then $j+1$ is inserted to the right of $i'$.
\end{itemize}

Let us give the details for the first example of Figure~\ref{chain}.
In this case, we have 6 ribbons, the leftmost cells of the successive ribbons are in columns 1,3,4,5,2,7.
So, starting from the permutation $1$:
\begin{itemize}
 \item first $i=3$, so insert 2 in the last position and get $12$,
 \item then $i=4$, so insert 3 to the left of $2$ and get $132$,
 \item then $i=5$, so insert 4 to the right of $3$ and get $1342$,
 \item then $i=2$, so insert 5 to the left of $1$ and get $51342$,
 \item then $i=7$, so insert 6 to the right of $4$ and get $513462$,
\end{itemize}
Thus we get $\tau=513462$.

\section{Laguerre histories and Dyck tableaux}

\label{sec:6}

One result of Françon and Viennot \cite{francon} is that the number of permutations with a 
given profile $D$ is a product of integers easily described in terms of the Dyck path $D$.
This is done via a bijection between permutations and {\it Laguerre histories} (see \cite{viennot1}), 
the name being justified by the link with Laguerre orthogonal polynomials.
To relate this with Theorem~\ref{bijchain} in the previous section, 
we give a bijection between chains of Dyck shapes and these objects.
Note that Laguerre histories were previously related to the PASEP partition function and permutation 
tableaux in \cite{josuat}.

\begin{defi}
 A column of a Dyck path is {\it below a step} $\nearrow$ if
 its topmost cell is not contiguous to another cell to its North-West. 
 Alternatively, the $k$th column is {\it below a step} $\nearrow$ if the $k$th step of the path is $\nearrow$.
 Let $D$ be a Dyck path of length $2n$, then a 
 {\it Laguerre history} of shape $D$ is a filling of $D$ with $n$ dots such that there is one dot in each 
 column below a step $\nearrow$ (and no dot in the other columns).
\end{defi}

See Figure~\ref{hist} for some examples of Laguerre histories.

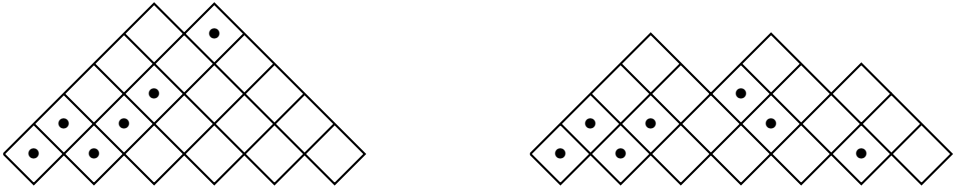
\begin{figure}[h!tp]   \centering  
 \begin{tikzpicture}[scale=0.4,thick]
   \draw (0,0) -- (5,5) -- (6,4) -- (7,5) -- (12,0) -- (11,-1) -- (6,4) -- (1,-1) -- (0,0);
   \draw (1,1) -- (3,-1) -- (8,4);
   \draw (2,2) -- (5,-1) -- (9,3);
   \draw (3,3) -- (7,-1) -- (10,2);
   \draw (4,4) -- (9,-1) -- (11,1);
   \node at (1,0) {$\bullet$};
   \node at (2,1) {$\bullet$};
   \node at (3,0) {$\bullet$};
   \node at (4,1) {$\bullet$};
   \node at (5,2) {$\bullet$};
   \node at (7,4) {$\bullet$};   
 \end{tikzpicture}
 \hspace{2cm}
 \begin{tikzpicture}[scale=0.4,thick]
   \draw (0,0) -- (4,4) -- (9,-1) -- (12,2) -- (14,0) -- (13,-1) -- (8,4) -- (3,-1) -- (1,1);
   \draw (0,0) -- (1,-1) -- (5,3);
   \draw (2,2) -- (5,-1) -- (9,3);
   \draw (3,3) -- (7,-1) -- (10,2);
   \draw (7,3) -- (11,-1) -- (13,1);
   \draw (10,2) -- (11,3) -- (12,2);
   \node at (1,0) {$\bullet$};
   \node at (2,1) {$\bullet$};
   \node at (3,0) {$\bullet$};
   \node at (4,1) {$\bullet$};
   \node at (7,2) {$\bullet$};
   \node at (8,1) {$\bullet$};
   \node at (11,0) {$\bullet$};   
  \end{tikzpicture} 
 \caption{Laguerre histories. \label{hist}}
\end{figure}

If a step $\nearrow$ in a Dyck path is from height $h$ to $h+1$, the column below it contains $\lfloor \frac h2 \rfloor+1$
cells (here the {\it height} of the step is seen as the $y$-coordinate in the path in $\mathbb{N}^2$ with two kinds of 
steps, not in the skew shape). This means that a Laguerre history is a Dyck path together with a choice among 
$\lfloor \frac h2 \rfloor+1$ possibilities for each step $\nearrow$ from height $h$ to $h+1$, i.e., we recover the notion 
of {\it subdivided Laguerre history} (see Viennot \cite[Chapter~2]{viennot1} and de Médicis and Viennot \cite[Section~4]{medicis}).
In particular, it follows from \cite[Section~4]{medicis} that there are $n!$ Laguerre histories of length $2n$.

\begin{prop}
  Let $D_1 \sqsubset \dots \sqsubset D_n$ be a chain of Dyck paths of shape $D=D_n$. We define a filling of $D$ by putting a dot 
  in the left extremity of each ribbon $D_{i+1}/D_i$. Then what we obtain is a Laguerre history, and this defines a bijection 
  between $n$-chains of Dyck shapes and Laguerre histories of the same shape $D$.
\end{prop}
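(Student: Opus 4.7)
My plan is first to verify that the filling is a Laguerre history, then to exhibit an explicit inverse. Throughout I use the description (from the Remark preceding Proposition~\ref{addribbonprop}) that each step $D_{j-1}\sqsubset D_j$ amounts to changing a single $\searrow$, at some position $k_j$, of the Dyck word $D_{j-1}\searrow\searrow$ into an $\nearrow$. A local inspection shows that the resulting ribbon $D_j/D_{j-1}$ consists of exactly one cell in each of the columns $k_j, k_j+1,\dots,2j-1$, and that its left extremity is the cell $(k_j-1,\,h_{k_j-1}(D_{j-1}))$ lying directly below the newly created $\nearrow$-step. Since subsequent chain-steps only convert further $\searrow$'s to $\nearrow$'s, column $k_j$ remains a $\nearrow$-column of $D_n$, and the integers $k_1,\dots,k_n$ are pairwise distinct; the $n$ dots therefore occupy the $n$ $\nearrow$-columns of $D_n$ bijectively, as a Laguerre history requires.

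For the inverse, the key identity is
\[
h_{k_j-1}(D_n)-h_{k_j-1}(D_{j-1}) \;=\; 2\,\bigl|\{\,l>j:\,k_l<k_j\,\}\bigr|,
\]
which follows because each later change at a position $\le k_j-1$ raises the height there by $2$. Consequently the dot for ribbon $j$ lies at the topmost cell of its column in $D_n$ exactly when $j$ is a right-to-left minimum of the sequence $(k_1,\dots,k_n)$. In particular $n$ is always such a minimum, and since the $k$-values of RL-minima are strictly increasing with the index, $k_n$ is the \emph{largest} $\nearrow$-column of $D_n$ whose dot sits at the topmost cell of its column. This yields the recursive inverse: identify $k_n$ by that maximality property; reconstruct $D_n/D_{n-1}$ as the unique ribbon consisting of the topmost cells of $D_n$ in columns $k_n, k_n+1,\dots,2n-1$ (ending at the bottom-right cell $(2n-2,0)$); delete this ribbon together with its dot, and recurse on the Laguerre history of $D_{n-1}$.

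The hardest part will be checking that this recursive removal is well-defined: that the prescribed topmost cells really form a ribbon, that their deletion from $D_n$ yields a Dyck shape of length $2(n-1)$, and that the surviving dots constitute a valid Laguerre history on it. All three points reduce to the same local analysis of how a single $\searrow\to\nearrow$ change alters a Dyck shape (with the shift-by-$2$ identity above controlling how column-tops move), so once that local picture is nailed down, the two maps are mutually inverse by construction.
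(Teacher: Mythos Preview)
Your argument is correct and follows essentially the same route as the paper. For the forward direction you and the paper both use the observation that passing from $D_{j-1}$ to $D_j$ flips a single $\searrow$ to $\nearrow$ at some position $k_j$, so the left extremities land bijectively in the $\nearrow$-columns. For the inverse, the paper simply says: start at the rightmost cell of $D$, walk left along the upper border until you meet a dot, peel off that ribbon, and recurse. Your right-to-left-minimum criterion (``the largest $\nearrow$-column whose dot is topmost'') is exactly a justification of why that walk stops at the correct column $k_n$; your height identity $h_{k_j-1}(D_n)-h_{k_j-1}(D_{j-1})=2\,|\{l>j:k_l<k_j\}|$ is the formal content that the paper leaves implicit. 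So the extra RL-minimum machinery is not needed for the description of the inverse, only for its verification, and you may want to streamline the write-up by stating the inverse in the paper's ``walk the upper border'' form and then using your identity to check it.
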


\begin{proof}
We first prove that what we obtain is indeed a Laguerre history. Let $2\leq i \leq n$.
The Dyck path $D_i$ is obtained from $D_{i-1} \searrow \searrow$ by changing some step $\searrow$ (say, this is
the $k$th step) into a $\nearrow$. By construction, it means that the $i$th ribbon has its left extremity 
in the $k$th column of the Dyck path. We can deduce that there is exactly one dot in each column below a step
$\nearrow$ in $D$ at the end of the process.

Then, we define the inverse bijection. Let $H$ be a Laguerre history of shape $D$. To find what are the ribbons 
of the corresponding chain of Dyck paths, we proceed as follows. Start from the rightmost cell of the Dyck path, 
and follow the upper border of cells from right to left until arriving to a cell containing a dot; the path we just 
followed is the last added ribbon $D$, remove $D$ and do the same thing to find the other ribbons. 
It is easy to show that the two bijections are inverses of each other.
\end{proof}

For example, the bijection described in the previous proposition sends the chains in Figure~\ref{chain} to 
the Laguerre histories in Figure~\ref{hist}.

\begin{defi}[Aval et al. \cite{aval1}]
 Let $D$ be a Dyck shape of length $2n$. A {\it Dyck tableau of shape} $D$ is a filling of $D$ with $n$ dots
 such that there is one dot in each odd column.
\end{defi}

See Figure~\ref{tabl} for some examples of Dyck tableaux.

\begin{figure}[h!tp]   \centering
 \begin{tikzpicture}[scale=0.4,thick]
   \draw (0,0) -- (5,5) -- (6,4) -- (7,5) -- (12,0) -- (11,-1) -- (6,4) -- (1,-1) -- (0,0);
   \draw (1,1) -- (3,-1) -- (8,4);
   \draw (2,2) -- (5,-1) -- (9,3);
   \draw (3,3) -- (7,-1) -- (10,2);
   \draw (4,4) -- (9,-1) -- (11,1);
   \node at (1,0) {$\bullet$};
   \node at (3,0) {$\bullet$};
   \node at (5,2) {$\bullet$};
   \node at (7,4) {$\bullet$};
   \node at (9,0) {$\bullet$};
   \node at (11,0) {$\bullet$};   
 \end{tikzpicture}
 \hspace{2cm}
 \begin{tikzpicture}[scale=0.4,thick]
   \draw (0,0) -- (4,4) -- (9,-1) -- (12,2) -- (14,0) -- (13,-1) -- (8,4) -- (3,-1) -- (1,1);
   \draw (0,0) -- (1,-1) -- (5,3);
   \draw (2,2) -- (5,-1) -- (9,3);
   \draw (3,3) -- (7,-1) -- (10,2);
   \draw (7,3) -- (11,-1) -- (13,1);
   \node at (1,0) {$\bullet$};
   \node at (3,0) {$\bullet$};
   \node at (5,0) {$\bullet$};
   \node at (7,2) {$\bullet$};
   \node at (9,0) {$\bullet$};
   \node at (11,0) {$\bullet$};
   \node at (13,0) {$\bullet$};   
  \end{tikzpicture} 
 \caption{Dyck tableaux. \label{tabl}}
\end{figure}
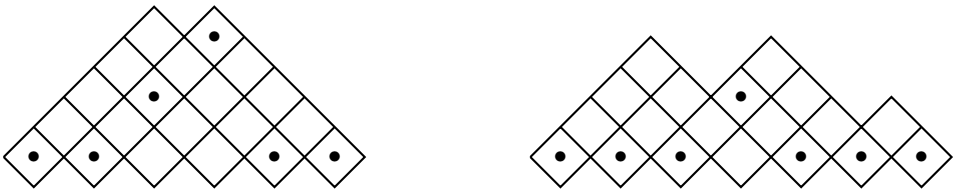

To see the equivalence between Dyck tableaux and our definition of Laguerre histories, we need the following lemma.

\begin{lemm}
 Let $D$ be a Dyck shape. Then there is a bijection between the columns of $D$ below a step $\nearrow$,
 and its odd columns. It is such that $c$ and $\kappa(c)$ contain the same number of cells.
\end{lemm}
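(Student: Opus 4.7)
The plan is to establish the lemma by showing that the multiset of cell-counts of the columns below a step $\nearrow$ and the multiset of cell-counts of the odd columns coincide; any bijection matching equal counts then serves as $\kappa$. The argument uses the Françon-Viennot pairing of the steps of the Dyck path together with the classical parenthesis matching between peaks and valleys.

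Let $h_0,h_1,\dots,h_{2n}$ be the successive heights of the Dyck path $D$ and $s_k$ its $k$-th step. First I would extend the cell-count formula from the paper to every column: the same geometric reasoning that yields $\lfloor h_{k-1}/2\rfloor+1$ cells under an up-step gives $\lfloor h_{k-1}/2\rfloor$ cells under a down-step, so if $\epsilon_k$ equals $1$ when $s_k=\nearrow$ and $0$ otherwise, then $c_k=\lfloor h_{k-1}/2\rfloor+\epsilon_k$. I then group the $2n$ columns into the $n$ Françon-Viennot pairs $(2j-1,2j)$; since $h_{2j-2}$ has the parity of $2j-2$, the integer $m_j:=h_{2j-2}/2$ is well defined. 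A short case check gives the ordered pair $(c_{2j-1},c_{2j})$ as $(m_j{+}1,m_j{+}1)$, $(m_j{+}1,m_j)$, $(m_j,m_j)$, $(m_j,m_j{-}1)$ at a valley, double ascent, double descent, and peak respectively.

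Only $c_{2j-1}$ contributes to the odd-columns multiset; to the below-$\nearrow$ multiset we add $c_{2j-1}$ when $s_{2j-1}=\nearrow$ (valleys, double ascents) and $c_{2j}$ when $s_{2j}=\nearrow$ (valleys, double descents). Comparing type by type, double ascents and double descents contribute identically to both sides, so the difference reduces to an extra $m_j{+}1$ from each valley on the below-$\nearrow$ side, set against an extra $m_j$ from each peak on the odd side.

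The main step is to cancel these excesses by matching valleys with peaks. Consider the sub-path $H_j:=h_{2j}$, whose increments are $+2$ at valleys, $-2$ at peaks, and $0$ at double ascents or descents, with $H_0=H_n=0$. The standard Dyck-word parenthesis matching pairs each valley $j$ with a peak $j'$ in such a way that $H_{j'-1}=H_j=H_{j-1}+2$, which translates to $m_{j'}=m_j+1$. Thus the excess $m_j{+}1$ from the valley exactly matches the excess $m_{j'}$ from its paired peak, and the two multisets coincide. Defining $\kappa$ as the identity on each odd column below $\nearrow$, $\kappa(2j)=2j-1$ inside each double descent, and $\kappa(2j)=2j'-1$ for each matched valley-peak pair, yields a bijection with $c=\kappa(c)$ in cell-count throughout. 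The only subtle point, and the main obstacle, is justifying the down-step cell-count formula and keeping the case analysis tidy, especially using the parity of $h_{2j-2}$ to make the arithmetic for double descents work out.
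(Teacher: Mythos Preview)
Your argument is correct, and the down-step cell-count formula $c_k=\lfloor h_{k-1}/2\rfloor$ is right (one quick way to see it: a column under a step depends only on $\min(h_{k-1},h_k)$, and an up-step from $h$ and a down-step from $h+1$ share the same minimum $h$; hence the latter has $\lfloor h/2\rfloor+1=\lfloor(h+1)/2\rfloor$ cells when $h$ is even, matching your formula after the parity shift).  Your case analysis and the valley--peak cancellation via the contracted path $H_j=h_{2j}$ are sound, and the resulting $\kappa$ is a genuine bijection.

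However, your route differs from the paper's.  The paper does not compute multisets at all: it directly matches each even up-step with its \emph{facing} down-step, namely the first later $\searrow$ returning to the same height in the full path $D$.  Because the two endpoints of a matched pair have opposite parity, this immediately lands on an odd column not under a $\nearrow$, and the equal heights give the equal cell-count at once.  Your construction instead performs the Dyck matching on the half-length path $(H_j)_j$, pairing valleys with peaks, and handles double descents locally by $2j\mapsto 2j-1$; the two bijections are not the same map (already for $D=\nearrow\nearrow\searrow\nearrow\nearrow\searrow\searrow\searrow\nearrow\searrow$ one sends column~$2$ to $3$ and column~$4$ to $7$, the other swaps these targets).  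The paper's proof is shorter and needs no case split; yours has the merit of making the Fran\c con--Viennot profile structure of Section~5 visible inside the argument, since your four cases are exactly the valley/peak/double-ascent/double-descent types.
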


\begin{proof}
The bijection $\kappa$ is constructed as follows. If a column $c$ is both odd and below a step $\nearrow$, then $\kappa(c)=c$.
This is always the case for the first column.
As for the other case, let $c$ be an even column below a step $\nearrow$.
It will be associated to $\kappa(c)$, an odd column of the same height as $c$ and which is not below a step $\nearrow$. 

We use the notion of {\it facing steps}. If we have a step $\nearrow$ in a Dyck path, from $(x,y)$ to $(x+1,y+1)$, its 
facing step is the step $\searrow$ from $(x',y+1)$ to $(x'+1,y)$ where $x'>x$ is minimal. Note that in this situation,
$x$ and $x'$ have different parity, which follows because a Dyck path only visits pairs $(i,j)$ with $i+j$ even.
Reciprocally, each step $\searrow$ is the facing step of a unique step $\nearrow$, than we can also call its facing step.

To define $\kappa(c)$, we follow the illustration in Figure~\ref{defkappa}.
The column $c$ is the below a step $\nearrow$, say $(x,y)$ to $(x+1,y+1)$. 
Note that $x+1$ is even, by the assumption that $c$ is an even column. 
And the number of cells of $c$ is $(y+1)/2$. In the example, $x=7$, $y=5$.
The step $\nearrow$ from $(x,y)$ to $(x+1,y+1)$ has a facing step $\searrow$ from $(x',y+1)$ to $(x'+1,y)$.
Then $\kappa(c)$ is defined as the $(x'+1)$st column, i.e., that below the end point of this facing step $\searrow$.

Since the notion of facing step gives a pairing between steps $\nearrow$ and $\searrow$, it is easily seen that
we can recover $c$ from $\kappa(c)$. By construction, $\kappa(c)$ is not below a step $\nearrow$, and $x'+1$, $y$
are odd. It follows that $\kappa(c)$ has the same number of cells as $c$. See Figure~\ref{defkappa}.
\end{proof}

\begin{figure}   \centering
   \begin{tikzpicture}[scale=0.4]
    \draw[dotted,thick] (-6,0) -- (18,0);
    \draw[fill=lightgray] (1,1) -- (2,2) -- (3,1) -- (2,0) -- (1,1);
    \draw[fill=lightgray] (1,3) -- (2,4) -- (3,3) -- (2,2) -- (1,3);
    \draw[fill=lightgray] (1,5) -- (2,6) -- (3,5) -- (2,4) -- (1,5);
    \draw  (7,1) -- (8,2) -- (9,1) -- (8,0) -- (7,1);
    \draw  (7,3) -- (8,4) -- (9,3) -- (8,2) -- (7,3);
    \draw  (7,5) -- (8,6) -- (9,5) -- (8,4) -- (7,5);
    \draw[fill=lightgray] (8,0) -- (9,1) -- (10,0) -- (9,-1) -- (8,0);
    \draw[fill=lightgray] (8,2) -- (9,3) -- (10,2) -- (9,1) -- (8,2);    
    \draw[fill=lightgray] (8,4) -- (9,5) -- (10,4) -- (9,3) -- (8,4);
    \draw[line width=0.6mm,->]  (1,5) -- (2,6);
    \draw[line width=0.6mm,->]  (8,6) -- (9,5);
    \draw (2,6) -- (3,7) -- (4,8) -- (5,7) -- (6,8) -- (7,7) -- (8,6);
    \draw (1,5) -- (0,4) -- (-1,5) -- (-2,4) -- (-3,3) -- (-4,2) -- (-5,1) -- (-6,0);
    \draw (10,4) -- (11,3) -- (12,2) -- (13,1) -- (14,2) -- (15,3) -- (16,2) -- (17,1) -- (18,0);
    \node at (2,-1) {$c$};
    \node at (9,-2) {$\kappa(c)$};    
   \end{tikzpicture}
   \caption{Definition of the bijection $\kappa$. \label{defkappa}}
\end{figure}
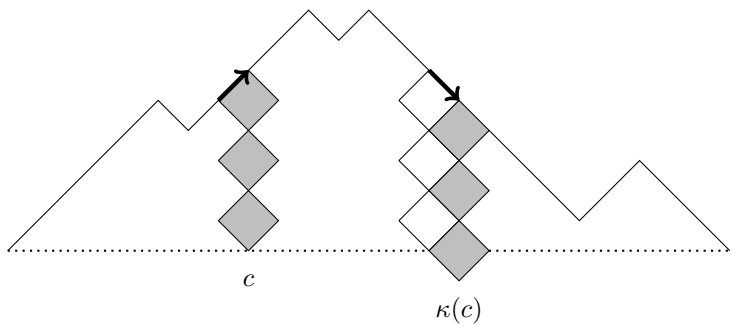


The previous lemma can be used to give a bijection between Laguerre histories of shape $D$ and Dyck tableaux of
the same shape. Let $H$ be a Laguerre history of shape $D$, we can define a Dyck tableau $T$ of shape $D$ 
by the following condition :
\begin{itemize}
 \item if a column $c$ in $H$ contains a dot in its $i$th cell (from bottom to top, for example), then 
       the column $\kappa(c)$ in $T$ also contains a dot in its $i$th cell.
\end{itemize}
This is clearly bijective.

For example, from the Laguerre histories in Figure~\ref{hist}, we obtain the Dyck tableaux in Figure~\ref{tabl}.
More precisely, the correspondence between columns can be illustrated by numbering dots, giving:
\[
   \begin{tikzpicture}[scale=0.4,thick]
   \draw (0,0) -- (5,5) -- (6,4) -- (7,5) -- (12,0) -- (11,-1) -- (6,4) -- (1,-1) -- (0,0);
   \draw (1,1) -- (3,-1) -- (8,4);
   \draw (2,2) -- (5,-1) -- (9,3);
   \draw (3,3) -- (7,-1) -- (10,2);
   \draw (4,4) -- (9,-1) -- (11,1);
   \node at (1,0) {1};
   \node at (2,1) {2};
   \node at (3,0) {3};
   \node at (4,1) {4};
   \node at (5,2) {5};
   \node at (7,4) {6};   
 \end{tikzpicture}
 \begin{tikzpicture}
     \draw[color=white] (0,0) rectangle (2,3);
     \node at (1,1) {$\to$};
 \end{tikzpicture}
 \begin{tikzpicture}[scale=0.4,thick]
   \draw (0,0) -- (5,5) -- (6,4) -- (7,5) -- (12,0) -- (11,-1) -- (6,4) -- (1,-1) -- (0,0);
   \draw (1,1) -- (3,-1) -- (8,4);
   \draw (2,2) -- (5,-1) -- (9,3);
   \draw (3,3) -- (7,-1) -- (10,2);
   \draw (4,4) -- (9,-1) -- (11,1);
   \node at (1,0) {1};
   \node at (3,0) {3};
   \node at (5,2) {5};
   \node at (7,4) {6};
   \node at (9,0) {4};
   \node at (11,0){2};   
 \end{tikzpicture},
\]
and
\[
 \begin{tikzpicture}[scale=0.4,thick]
   \draw (0,0) -- (4,4) -- (9,-1) -- (12,2) -- (14,0) -- (13,-1) -- (8,4) -- (3,-1) -- (1,1);
   \draw (0,0) -- (1,-1) -- (5,3);
   \draw (2,2) -- (5,-1) -- (9,3);
   \draw (3,3) -- (7,-1) -- (10,2);
   \draw (7,3) -- (11,-1) -- (13,1);
   \draw (10,2) -- (11,3) -- (12,2);   
   \node at (1,0) {1};
   \node at (2,1) {2};
   \node at (3,0) {3};
   \node at (4,1) {4};
   \node at (7,2) {5};
   \node at (8,1) {6};
   \node at (11,0){7};   
  \end{tikzpicture} 
 \begin{tikzpicture}
     \draw[color=white] (0,0) rectangle (2,3);
     \node at (1,1) {$\to$};
 \end{tikzpicture}
 \begin{tikzpicture}[scale=0.4,thick]
   \draw (0,0) -- (4,4) -- (9,-1) -- (12,2) -- (14,0) -- (13,-1) -- (8,4) -- (3,-1) -- (1,1);
   \draw (0,0) -- (1,-1) -- (5,3);
   \draw (2,2) -- (5,-1) -- (9,3);
   \draw (3,3) -- (7,-1) -- (10,2);
   \draw (7,3) -- (11,-1) -- (13,1);
   \draw (10,2) -- (11,3) -- (12,2);
   \node at (1,0) {1};
   \node at (3,0) {3};
   \node at (5,0) {4};
   \node at (7,2) {5};
   \node at (9,0) {6};
   \node at (11,0) {7};
   \node at (13,0) {2};   
  \end{tikzpicture}.
\]

\section{A generalization}

\label{sec:7}

In this section, we consider a generalization of stammering tableaux where we do not require 
$\lambda^{(0)}=\lambda^{(3n)}=\varnothing$ anymore. Let $T^{(n)}_{\mu,\nu}$ be the number of 
such tableaux $\lambda^{(0)},\dots,\lambda^{(3n)}$ with $\lambda^{(0)}=\mu$ and  $\lambda^{(3n)}=\nu$.
We don't know how to compute this number in general, but we have a simple
answer when either $\mu$ or $\nu$ is empty.

Also we need to consider a generalization of rook placements in $2\delta_n$, so in this section we do 
not require that there is exactly one dot per row.

\begin{lemm}
The number of fillings of $2\delta_n$ with $k$ dots, such that there is at most one dot per row and at most one per column, is  
\[
  \frac{(n+1)!}{(n-k+1)!}  \binom{n}{k}.
\]
\end{lemm}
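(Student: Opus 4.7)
The plan is to induct on $n$ (with $k$ free), letting $f(n,k)$ denote the number of fillings described in the statement; the base case $f(0,0)=1$ is immediate.

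The key step is the recursion
\[
  f(n,k) = f(n-1,k) + (2n-k+1)\,f(n-1,k-1).
\]
To see this, observe that $2\delta_{n-1}$ sits inside $2\delta_n$ as its top $n-1$ rows and occupies only columns $1,\dots,2n-2$; the bottom row of $2\delta_n$ has length $2n$, so its two rightmost columns $2n-1,2n$ lie strictly outside the rest of the diagram. Now condition on whether there is a dot in the bottom row. The ``no'' case contributes $f(n-1,k)$. In the ``yes'' case, the top $n-1$ rows carry a filling of $2\delta_{n-1}$ with $k-1$ dots, whose $k-1$ occupied columns all lie in $\{1,\dots,2n-2\}\subseteq\{1,\dots,2n\}$; the bottom dot can therefore be placed in any of the $2n-(k-1)=2n-k+1$ remaining bottom-row columns, independently of the filling above. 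This last point---that the count of available bottom-row columns depends only on $k$ and not on the specific upper filling---is the only genuinely conceptual step, and it relies crucially on the extra width of the bottom row.

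It then remains to verify that $F(n,k):=\frac{(n+1)!}{(n-k+1)!}\binom{n}{k}$ satisfies this recursion. Factoring $\tfrac{n!}{(n-k+1)!}$ out of the right-hand side reduces the check to the identity
\[
  (n-k+1)\binom{n-1}{k} + (2n-k+1)\binom{n-1}{k-1} = (n+1)\binom{n}{k}.
\]
Writing the binomials as factorials over the common denominator $k!(n-k)!$, this in turn reduces to $(n-k+1)(n-k)+k(2n-k+1)=n(n+1)$, which expands on the left to $n^2+n$ and is therefore trivial. The initial condition $F(0,0)=1$ is clear, so $f(n,k)=F(n,k)$ by induction. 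The only obstacle is noticing the recursion; the rest is bookkeeping.
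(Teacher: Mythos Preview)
Your proof is correct and follows essentially the same approach as the paper: both establish the recursion $a_{n,k}=a_{n-1,k}+(2n-k+1)a_{n-1,k-1}$ and check that the claimed formula satisfies it with the right initial conditions. The paper merely asserts that the recursion ``can be checked either on the definition with rook placements or on the given formula,'' whereas you supply both verifications in full detail.
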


\begin{proof}
 These numbers satisfy the recursion
\[
  a_{n,k} = a_{n-1,k} + (2n-k+1) a_{n-1,k-1},
\]
which can be checked either on the definition with rook placements or on the given formula.
The initial conditions are $a_{0,0}=1$, $a_{n,k}=0$ if $n<0$ or $k<0$ or $k>n$.
\end{proof}

\begin{prop} Let $\lambda\in\mathcal{Y}$ with $|\lambda|=k$, and let $f_\lambda$ be the number of standard tableaux of shape $\lambda$. Then we have
\[
  T^{(n)}_{\varnothing,\lambda} = (n+1)! \binom{n}{k} f_{\lambda}
\]
and 
\[
  T^{(n)}_{\lambda,\varnothing} = \frac{(n+1)!}{(k+1)!} \binom{n}{k} f_{\lambda}.
\]
\end{prop}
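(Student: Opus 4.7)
The plan is to express both $T$-counts as coefficients in iterated operator expansions, using the realization $F+E = D(U+I)^2$ from Section~1.3. By linearity of the definitions in \eqref{duud}, a generalized stammering tableau from $\mu$ to $\nu$ of size $n$ contributes exactly $1$ to the coefficient of $\nu$ in $(F+E)^n\mu$. Everything rests on a single identity: writing $\pi_k := (U+I)^k\varnothing$, one has
\[
  (F+E)\,\pi_k \;=\; (k+2)\,\pi_{k+1}.
\]
An easy induction from $DU-UD=I$ gives $DU^j\varnothing = jU^{j-1}\varnothing$; binomial expansion then yields $D(U+I)^j\varnothing = j(U+I)^{j-1}\varnothing$, and applying this to $D(U+I)^{k+2}\varnothing$ proves the identity.

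For the first formula, iterate: $(F+E)^n\varnothing = 2\cdot 3\cdots(n+1)\,\pi_n = (n+1)!\,\pi_n$. Now
\[
  \pi_n \;=\; \sum_{j=0}^n\binom{n}{j}U^j\varnothing \;=\; \sum_{\lambda}\binom{n}{|\lambda|}f_\lambda\,\lambda,
\]
where the second equality uses the standard fact $U^j\varnothing=\sum_{|\lambda|=j}f_\lambda\lambda$ (each maximal chain $\varnothing\lessdot\cdots\lessdot\lambda$ contributes one term). Reading off the coefficient of $\lambda$ yields $T^{(n)}_{\varnothing,\lambda}=(n+1)!\binom{n}{|\lambda|}f_\lambda$.

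For the second formula, I first reduce to computing a single number. Using $DU-UD=I$, normal-order $(F+E)^n = \sum_{i,j\ge0}a^{(n)}_{i,j}U^iD^j$. Since $U^iD^j\lambda$ is a sum of partitions of rank $|\lambda|-j+i$, the coefficient of $\varnothing$ in $(F+E)^n\lambda$ forces $i=0$ and $j=k:=|\lambda|$; combined with $D^k\lambda = f_\lambda\varnothing$ (again by the chain count), this gives
\[
  T^{(n)}_{\lambda,\varnothing} \;=\; a^{(n)}_{0,k}\,f_\lambda,
\]
so the proof reduces to showing $a^{(n)}_{0,k} = \frac{(n+1)!}{(k+1)!}\binom{n}{k}$.

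To determine $a^{(n)}_{0,k}$, iterate the key identity one level up: $(F+E)^n\pi_k = \frac{(n+k+1)!}{(k+1)!}\,\pi_{n+k}$. Extract the coefficient of $\varnothing$ from both sides. The right side contributes $\binom{n+k}{0}=1$, giving $\frac{(n+k+1)!}{(k+1)!}$; the left, using $\pi_k=\sum_\mu\binom{k}{|\mu|}f_\mu\mu$ together with $\sum_{|\mu|=j}f_\mu^2 = j!$, becomes $\sum_{j=0}^k\binom{k}{j}\,j!\,a^{(n)}_{0,j}$. The resulting triangular system (varying $k$) is solved by $a^{(n)}_{0,k} = \frac{(n+1)!}{(k+1)!}\binom{n}{k}$, which one checks via the Vandermonde identity $\sum_j\binom{k}{j}\binom{n+1}{n-j}=\binom{n+k+1}{n}$; invertibility of binomial inversion then guarantees uniqueness. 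The main technical obstacles are the bookkeeping in the normal-ordering step and the Vandermonde verification; both are short but require care.
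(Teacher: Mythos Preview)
Your proof is correct and takes a genuinely different route from the paper. The paper argues bijectively via Fomin's growth diagrams: placing the sequence $\lambda^{(0)},\dots,\lambda^{(3n)}$ along the North-East border of $2\delta_n$ and running the reverse local rules yields a (possibly partial) rook placement together with a residual sequence along the West or South border, and counting these pairs gives the formulas (the second one requiring the lemma on fillings of $2\delta_n$ with $k$ dots). Your argument is instead purely algebraic in the differential-poset calculus: everything follows from the single identity $D(U+I)^j\varnothing = j(U+I)^{j-1}\varnothing$, which makes $(F+E)$ act as a shift-and-scale on the elements $\pi_k$. The first formula then drops out immediately; the second uses normal ordering to reduce to determining the coefficients $a^{(n)}_{0,k}$, which you pin down by pairing the same iterated identity against $\sum_{|\mu|=j}f_\mu^2=j!$ and a Vandermonde check.

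What each approach buys: the paper's proof is bijective and ties directly into the rook-placement and growth-diagram machinery developed elsewhere in the article, so the combinatorial meaning of each factor in the formula is visible. Your approach is shorter, needs no auxiliary counting lemma, and transparently exhibits why the two formulas have the same shape (both are coefficients of the same operator word, extracted at different endpoints). It would also transfer verbatim to any $r$-differential poset, since it uses only $DU-UD=I$ and $D\varnothing=0$.
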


\begin{proof}
The idea is to use growth diagrams to build a bijection between these generalized stammering tableaux and 
rook placements in the double staircase together with some additional data. Note that we will here use the
{\it reverse local rules} of growth diagrams (\cite[Section~2.1.8]{langer}). Using the notation of Figure~\ref{growth}, 
it means that if we know $\mu$, $\nu$, $\rho$, we can find $\lambda$ and whether the cell contains a dot or not.

Let us begin with the first equality. 
Let $\lambda^{(0)},\dots,\lambda^{(3n)}$ with $\lambda^{(0)}=\varnothing$ and  $\lambda^{(3n)}=\lambda$.
We can put this sequence of partitions along the North-East border of $2\delta_n$ from the top left corner to the 
bottom right corner, and use the reverse local rules of growth diagrams to complete the picture, i.e.,
fill some cells with a dot and assign a Young diagram to each vertex. We obtain
a rook placement $R$ in $2\delta_n$ with $n$ dots, and the bottom border contains a sequence $\rho^{(0)},\dots,\rho^{(2n)}$
such that $\rho^{(i-1)}=\rho^{(i)}$ if the $i$th column of $R$ contains a dot, 
$\rho^{(i-1)}=\rho^{(i)}$ or $\rho^{(i-1)} \lessdot \rho^{(i)}$ otherwise.
This $\rho^{(0)},\dots,\rho^{(2n)}$ can be encoded by a standard tableau of shape $\lambda$ (describing
the sequence of shapes without repetition) and the choice of $k$ columns among the $n$ columns without 
a dot (describing what are the indices $i$ such that $\rho^{(i-1)} \lessdot \rho^{(i)}$).
So there are $\binom nk f_\lambda$ such sequences for a given rook placement.
Since there are $(n+1)!$ rook placements in $2\delta_n$, we obtain $T^{(n)}_{\varnothing,\lambda} = (n+1)! \binom{n}{k} f_{\lambda}$.

The second result is similar but starting with the assumption $\lambda^{(0)}=\lambda$ and  $\lambda^{(3n)}=\varnothing$.
We put this sequence along the North-East border of $2\delta_n$, and the reverse local rule of growth diagrams are performed.
We obtain a partial rook placement $R$ in $2\delta_n$ with $n-k$ dots, and the left border of $2\delta_n$ is a sequence
$\rho^{(0)},\dots,\rho^{(n)}$ such that $\rho^{(i-1)}=\rho^{(i)}$ if the $i$th row of $R$ contains a dot, and
$\rho^{(i-1)} \lessdot \rho^{(i)}$ otherwise. Using the previous lemma, we obtain 
$T^{(n)}_{\lambda,\varnothing} = \frac{(n+1)!}{(k+1)!} \binom{n}{k} f_{\lambda}.$
\end{proof}

To illustrate the first identity, if we start from \Yboxdim{8pt}
\[
  \big( \varnothing , \;
  \yng(1) \:, \;
  \yng(1,1) \:; \;
  \yng(1) \:, \;
  \yng(2) \:, \;
  \yng(2) \:; \;
  \yng(1) \:, \;
  \yng(1,1) \:, \;
  \yng(1,2) \:; \;
  \yng(2) \:, \;
  \yng(1,2) \:, \;
  \yng(2,2) \:; \;
  \yng(2) \:, \;
  \yng(1,2) \:, \;
  \yng(2,2) \:; \;
  \yng(1,2) \big),
\]
the growth diagram is in Figure~\ref{reversediagram}.
Here $n=4$ and $k=3$. We get a rook placement in $2\delta_4$, and the bottom line 
\[
   \varnothing \:, \;
   \varnothing \:, \;
   \varnothing \:, \;
   \varnothing \:, \;
   \varnothing \:, \;
   \yng(1) \:, \;
   \yng(1) \:, \;
   \yng(1,1) \:, \;
   \yng(1,2).
\]
This sequence of Young diagrams can be encoded in the standard tableau \Yboxdim{11pt}
\[ 
  \text{\tiny \young(2,13)} \:,
\]
together with the indices $i=5,7,9$ such that the $i$th column in the growth diagram 
has a change in its bottom line.

\Yboxdim{6pt}

\begin{figure}[h!tp]   \centering
  \begin{tikzpicture}[scale=0.64,thick]
    \draw[dotted,gray] (0,0) grid (2,4);
    \draw[dotted,gray] (2,0) grid (4,3);
    \draw[dotted,gray] (4,0) grid (6,2);
    \draw[dotted,gray] (6,0) grid (8,1);
    \node at (0.5,3.5) {$\bullet$};
    \node at (2.5,2.5) {$\bullet$};
    \node at (1.5,1.5) {$\bullet$};
    \node at (5.5,0.5) {$\bullet$};
    \node at (0,4) {$\varnothing$};
    \node at (1,4) {\yng(1)};
    \node at (2,4) {\yng(1,1)};
    \node at (2,3) {\yng(1)};
    \node at (3,3) {\yng(2)};
    \node at (4,3) {\yng(2)};
    \node at (4,2) {\yng(1)};
    \node at (5,2) {\yng(1,1)};
    \node at (6,2) {\yng(1,2)};
    \node at (6,1) {\yng(2)};
    \node at (7,1) {\yng(1,2)};
    \node at (8,1) {\yng(2,2)};
    \node at (8,0) {\yng(1,2)};
    \node at (1,3) {$\varnothing$};
    \node at (0,3) {$\varnothing$};
    \node at (3,2) {\yng(1)};
    \node at (2,2) {\yng(1)};
    \node at (1,2) {$\varnothing$};
    \node at (0,2) {$\varnothing$};
    \node at (5,1) {\yng(1)};
    \node at (4,1) {$\varnothing$};
    \node at (3,1) {$\varnothing$};
    \node at (2,1) {$\varnothing$};
    \node at (1,1) {$\varnothing$};
    \node at (0,1) {$\varnothing$};
    \node at (7,0) {\yng(1,1)};
    \node at (6,0) {\yng(1)};
    \node at (5,0) {\yng(1)};
    \node at (4,0) {$\varnothing$};
    \node at (3,0) {$\varnothing$};
    \node at (2,0) {$\varnothing$};
    \node at (1,0) {$\varnothing$};
    \node at (0,0) {$\varnothing$};
  \end{tikzpicture}
 \caption{A growth diagram illustrating reverse local rules. \label{reversediagram}}
\end{figure}

And if we start from \Yboxdim{8pt}
\[
  \yng(1,2) \:, \;
  \yng(1,2) \:, \;
  \yng(1,2) \:; \;
  \yng(2) \:, \;
  \yng(3) \:, \;
  \yng(3) \:; \;
  \yng(2) \:, \;
  \yng(2) \:, \;
  \yng(1,2) \:; \;
  \yng(1,1) \:, \;
  \yng(1,1) \:, \;
  \yng(1,1) \:; \;
  \yng(1) \:, \;
  \yng(1) \:, \;
  \yng(1) \:; \;
  \varnothing \:,
\]
the growth diagram is in Figure~\ref{reversediagram2}. Here $n=5$ and $k=3$. We get a rook placement in $2\delta_5$ with $2$ rooks, 
together with the sequence of Young diagrams in the rightmost line:
\[
  \varnothing \:, \; \varnothing \:, \; \yng(1) \:, \; \yng(2) \:, \; \yng(2) \:, \; \yng(1,2) \:,
\]
which can be encoded in the standard tableau \Yboxdim{11pt}
\[
  \text{\tiny \young(3,12)} \:.
\]
Note that we do not need to keep track of when repetitions occur in this sequence, because these
correspond to rows containing a dot in the growth diagram.

\begin{figure}[h!tp]  \centering \Yboxdim{6pt}  
  \begin{tikzpicture}[scale=0.64,thick]
    \draw[dotted,gray] (0,0) grid (2,5);
    \draw[dotted,gray] (2,0) grid (4,4);
    \draw[dotted,gray] (4,0) grid (6,3);
    \draw[dotted,gray] (6,0) grid (8,2);
    \draw[dotted,gray] (6,0) grid (10,1);
    \node at (2.5,3.5) {$\bullet$};
    \node at (5.5,0.5) {$\bullet$};
    \node at (0,5) {\yng(1,2)};
    \node at (1,5) {\yng(1,2)};
    \node at (2,5) {\yng(1,2)};
    \node at (2,4) {\yng(2)};
    \node at (3,4) {\yng(3)};
    \node at (4,4) {\yng(3)};
    \node at (4,3) {\yng(2)};
    \node at (5,3) {\yng(2)};
    \node at (6,3) {\yng(1,2)};
    \node at (6,2) {\yng(1,1)};
    \node at (7,2) {\yng(1,1)};
    \node at (8,2) {\yng(1,1)};
    \node at (8,1) {\yng(1)};
    \node at (9,1) {\yng(1)};
    \node at (10,1) {\yng(1)};
    \node at (10,0) {$\varnothing$};
    \node at (1,4) {\yng(2)};
    \node at (0,4) {\yng(2)};
    \node at (3,3) {\yng(2)};
    \node at (2,3) {\yng(2)};
    \node at (1,3) {\yng(2)};
    \node at (0,3) {\yng(2)};
    \node at (5,2) {\yng(1)};
    \node at (4,2) {\yng(1)};
    \node at (3,2) {\yng(1)};
    \node at (2,2) {\yng(1)};
    \node at (1,2) {\yng(1)};
    \node at (0,2) {\yng(1)};
    \node at (7,1) {\yng(1)};
    \node at (6,1) {\yng(1)};
    \node at (5,1) {$\varnothing$};
    \node at (4,1) {$\varnothing$};
    \node at (3,1) {$\varnothing$};
    \node at (2,1) {$\varnothing$};
    \node at (1,1) {$\varnothing$};
    \node at (0,1) {$\varnothing$};
    \node at (9,0) {$\varnothing$};
    \node at (8,0) {$\varnothing$};
    \node at (7,0) {$\varnothing$};
    \node at (6,0) {$\varnothing$};
    \node at (5,0) {$\varnothing$};
    \node at (4,0) {$\varnothing$};
    \node at (3,0) {$\varnothing$};
    \node at (2,0) {$\varnothing$};
    \node at (1,0) {$\varnothing$};
    \node at (0,0) {$\varnothing$};    
  \end{tikzpicture}
 \caption{A growth diagram illustrating reverse local rules. \label{reversediagram2}}
\end{figure}

\section{Open problems}

\label{sec:8}

The poset introduced in Section~\ref{sec:4} is reminicent of the notion of Fomin's 
dual graded graphs \cite{fomin2}, since there are $n!$ paths from the minimal element to some 
element of rank $n$. So the problem is to find some dual order relation $\preccurlyeq$
on the set of Dyck paths of all possible lengths, so that we have the commutation relation
$DU-UD=I$ (or some variation) for the up and down operators as in \eqref{duud} where $U$ is 
defined with the cover relation of $\leq$ and $D$ is defined with the cover relation of 
$\preccurlyeq$. See \cite{fomin2} for details. Note that $\preccurlyeq$ must be a tree
in this situation.  If there is such a construction, it might be related with a 
Hopf algebra of Dyck paths.  So, defining a relevant coproduct on the algebra
of Dyck path of Brändén and Leander \cite{brandenleander} is a related question.

It is well known that standard tableaux are related with representations of the symmetric 
groups. Oscillating tableaux are related with that of Brauer algebras \cite{barceloram}, 
and vacillating tableaux are related with that of partition algebras \cite{halversonram}.
It would be interesting to see if our stammering tableaux have any similar algebraic meaning.

On the purely combinatorial level, there are a lot of enumerative or bijective results
on Dyck tableaux, tree-like tableaux, permutation tableaux, and links with permutation statistics,
$q$-Eulerian numbers, etc. See for example \cite{aval1,aval2,josuat,viennot}. 
It would be interesting to see if the combinatorial objects presented in this paper are helpful 
on these topics. 


\setlength{\parindent}{0mm}


\begin{thebibliography}{999}

\bibitem{barceloram}
 \textsc{H. Barcelo and A. Ram}:
 Combinatorial representation theory. 
 New perspectives in algebraic combinatorics (Berkeley, CA, 1996–97), 23--90, 
 Math. Sci. Res. Inst. Publ. 38, Cambridge Univ. Press, Cambridge, 1999.

\bibitem{aval1}
 \textsc{J.-C. Aval, A. Boussicault and S. Dasse-Hartaut}:
 {\it Dyck tableaux.}
 Theoret. Comput. Sci. 502 (2013), 195--209.

\bibitem{aval2}
 \textsc{J.-C. Aval, A. Boussicault and P. Nadeau}:
 {\it Tree-like tableaux.}
 FPSAC'2011, Reykjavik, Island, DMTCS Proceedings (2011). 


\bibitem{brandenleander}
 \textsc{P. Brändén, M. Leander}:
 {\it Multivariate $P$-Eulerian polynomial.}
 Preprint. 
 
 \href{https://arxiv.org/abs/1604.04140}{arXiv:1604.04140}.
 
\bibitem{chen}
 \textsc{W.Y.C. Chen, E.Y.P. Deng, R.R.X. Du, R.P. Stanley and C.H. Yan}:
 {\it Crossings and nestings of matchings and partitions.}
 Trans. Amer. Math. Soc. 359 (2007), 1555--1575. 

\bibitem{corteel}
 \textsc{S. Corteel and L. K. Williams}:
 {\it Tableaux combinatorics for the asymmetric exclusion process.}
 Adv. in Appl. Math. 39(3) (2007), 293--310.

\bibitem{derrida}
 \textsc{B. Derrida, M. Evans, V. Hakim and V. Pasquier}: 
 {\it Exact solution of a 1D asymmetric exclusion model using a matrix formulation.}
 J. Phys. A: Math. Gen. 26 (1993), 1493–1517.

\bibitem{fomin}
 \textsc{S.V. Fomin}:
 {\it The generalized Robinson-Schensted-Knuth correspondence.}
 (Russian) Zap. Nauchn. Sem. Leningrad. Otdel. Mat. Inst. Steklov. (LOMI) 155 (1986), 
 translation in J. Soviet Math. 41 (1988), no. 2, 979--991.

\bibitem{fomin2}
 \textsc{S.V. Fomin}:
 {\it Duality of graded graphs.}
 J. Algebraic Combin. 3(4) (1994), 357--404. 

\bibitem{francon}
 \textsc{J. Françon and G. Viennot}: 
 {\it Permutations selon leurs pics, creux, doubles montées et double descentes, nombres d'Euler et nombres de Genocchi.}
 Discrete Math. 28(1) (1979), 21--35.

\bibitem{josuat}
 \textsc{M. Josuat-Vergès}:
 {\it Combinatorics of the three-parameter PASEP partition function.}
 El. J. of Combin. 18(1) (2011), Article P22. 

\bibitem{halversonram}
 \textsc{T. Halverson and A. Ram}:
 {\it Partition algebras.}
 European J. Combin. 26(6) (2005), 869--921.

\bibitem{krattenthaler}
 \textsc{C. Krattenthaler}: 
 {\it Growth diagrams, and increasing and decreasing chains in fillings of Ferrers shapes.}
 Adv. in Appl. Math. 37(3) (2006), 404--431.

\bibitem{langer} 
 \textsc{R. Langer}:
 {\it Cylindric plane partitions, lambda determinant, commutators in semicircular systems.}
 Phd thesis, Université Paris-Est Marne-la-Vallée, 2013.
 
 \url{https://pastel.archives-ouvertes.fr/tel-01327405}
 
\bibitem{medicis}
 \textsc{A. de Médicis and X.G. Viennot}: 
 {\it Moments de polynômes de $q$-Laguerre et la bijection de Foata-Zeilberger.}
 Adv. in Appl. Math. 15 (1994), 262--304.

\bibitem{roby}
 \textsc{T.W. Roby}:
 {\it Applications and extensions of Fomin's generalization of the Robinson-Schensted correspondence to differential posets.}
 Phd. Thesis, M.I.T., Cambridge, 1991.

\bibitem{stanley} 
 \textsc{R.P. Stanley}: 
 {\it Differential posets.}
 J. Am. Math. Soc. 1 (1988), 919--961.

\bibitem{stanley2}
 \textsc{R.P. Stanley}:
 Enumerative Combinatorics, Volume 2. Cambridge University Press, 1999.
 
 
\bibitem{varvak}
 \textsc{A. Varvak}:
 {\it Rook numbers and the normal ordering problem.}
 J. Combin. Theory Ser. A 112(2) (2005), 292--307.

\bibitem{viennot1}
 \textsc{G. Viennot}: 
 {\it Une théorie combinatoire des polynômes orthogonaux.}
 Notes de cours, UQÀM, Montréal, 1984.


 \url{http://www.xavierviennot.org/xavier/polynomes_orthogonaux.html}
 
\bibitem{viennot2}
 \textsc{G. Viennot}:
 {\it Une forme géométrique de la correspondance de Robinson-Schensted.}
 In « Combinatoire et représentation du groupe symétrique », 
 Lecture Notes in Math. 579, ed. D. Foata, Springer-Verlag, Berlin, 1978, pp. 29--58.

\bibitem{viennot}
 \textsc{X. Viennot}: 
 {\it Alternative tableaux, permutations and partially asymmetric exclusion process.}
 Talk at the Isaac Newton institute, April 2007.
 
 \url{https://www.newton.ac.uk/seminar/20080423140015001}
 


\end{thebibliography}
\end{document}